\documentclass[11pt,reqno]{amsart}
\setlength{\textheight}{220mm} \setlength{\textwidth}{155mm}
\setlength{\oddsidemargin}{1.5mm}
\setlength{\evensidemargin}
{1.5mm} \setlength{\topmargin}{5mm}

\linespread{1.2}\selectfont

\usepackage{enumerate}
\usepackage[page,title,titletoc,header]{appendix}
\newtheorem{theorem}{Theorem}[section]
\newtheorem{lemma}[theorem]{Lemma}
\newtheorem{corollary}[theorem]{Corollary}
\newtheorem{proposition}[theorem]{Proposition}

\newtheorem{conjecture}[theorem]{Conjecture}

\newtheorem{Concluding Remark}[theorem]{Concluding Remark}

\newcommand{\no}{\nonumber}
\numberwithin{equation}{section}

\begin{document}
\begin{center}
	{\Large\bf Asymptotic formulas for the coefficients of the truncated theta
		series}
\end{center}

\begin{center}
	
	Renrong Mao  
	
	Department of Mathematics,\\
	Soochow University, \\
	Suzhou, 215006,
	People's Republic of China\\[6pt]

	Email: rrmao@suda.edu.cn

\end{center}

\textbf{Abstract:}
Motivated by the groundbreaking work of Andrews and Merca, truncated theta series have been extensively studied over the years. 
In particular, Merca made conjectures on the non-negativity of the coefficient of $q^N$ in  
truncated series from the Jacobi triple product identity and the quintuple product identity.
In this paper, using Wright's Circle Method, we establish asymptotic formulas for the coefficients of truncated theta series and prove that Merca's conjectures are true for sufficiently large $N$.

\textbf{Keywords:} Truncated theta
series, the Jacobi triple product identity, the quintuple product identity, asymptotic inequality, Circle Method


\allowdisplaybreaks
\section{INTRODUCTION}
Euler’s pentagonal number theorem plays an important role in the theory of partitions. It gives 
\begin{align}\label{pen}
	(q;q)_\infty=\sum_{j=0}^\infty(-1)^jq^{j(3j+1)/2}(1-q^{2j+1}).
\end{align}
In equation \eqref{pen} and for the rest of this article, we use the notations
\begin{align*}
	(x_1, x_2, \ldots, x_k;q)_m &:=\prod_{n=0}^{m-1}
	(1-x_1q^n)(1-x_2q^n)\cdots(1-x_kq^n),\\
	(x_1,x_2, \ldots, x_k;q)_\infty &:=\prod_{n=0}^\infty
	(1-x_1q^n)(1-x_2q^n)\cdots(1-x_kq^n),\\
	{ L \brack K}_q&:=\left\{
	\begin{aligned}
		&0,\quad&&\textrm{if }K<0\textrm{ or } K>L,\\
		&\frac{(q;q)_L}{(q;q)_K(q;q)_{L-K}}, \quad&&\text{otherwise}\\
	\end{aligned}
	\right.
\end{align*}
and we require $|q|<1$ for absolute convergence.

Andrews and Merca \cite{tr} proved the truncated pentagonal number theorem:
\begin{align}\label{trpen1}
	\frac{1}{(q)_\infty}&\sum_{j=0}^{k-1}(-1)^{j}q^{j(3j+1)/2}(1-q^{2j+1})\\&= 1+(-1)^{k-1}\sum_{n=1}^{\infty}\frac{q^{(k+1)n+k(k-1)/2}}{(q;q)_n}\begin{bmatrix}
		&n-1\quad \\&k-1\quad
	\end{bmatrix}_q.\nonumber
\end{align}  
Their work inspired a wide study on truncated identities and their partition-theoretic interpretations. See \cite{guass,mao2,wang,xia,yao,yee,zhou}, for example.
In particular,
Andrews-Merca \cite{tr} and Guo-Zeng \cite{guass} made the following conjecture truncated Jacobi triple product
series, independently.
\begin{conjecture}\label{c1}
	For positive integers $k, R, S$ with  $k\geq1$ and $1\leq S< R/2$, let
	\begin{align*}
	\sum_{N=0}^\infty 	\mathcal{C}^\prime_{R,S,k}(N)q^N:=		(-1)^{k-1}\frac{\sum_{j=0}^{k-1}(-1)^{j}
			q^{Rj(j+1)/2-Sj}(1-q^{(2j+1)S})}
		{(q^R,q^{R-S},q^R;q^R)_\infty}.
	\end{align*}
Then $\mathcal{C}^\prime_{R,S,k}(N)\geq0$ for all $N\geq0$.
\end{conjecture}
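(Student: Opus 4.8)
The plan is to replace the analytic generating function by an exact finite arithmetic identity and to prove nonnegativity of that identity for every $N$, splitting the range of $N$ at the first exponent that can occur. By the Jacobi triple product identity the denominator is the full theta sum $(q^{S},q^{R-S},q^{R};q^{R})_\infty=\sum_{m=-\infty}^{\infty}(-1)^mq^{\epsilon_m}$ with $\epsilon_m:=\tfrac{R}{2}m(m+1)-Sm$, and the same unfolding that sends the $-q^{(2j+1)S}$ factor to the index $-(j+1)$ shows the truncated numerator to be exactly the partial bilateral sum $\sum_{m=-k}^{k-1}(-1)^mq^{\epsilon_m}$. Writing $p(n):=[q^n](q^{S},q^{R-S},q^{R};q^{R})_\infty^{-1}$ for the number of partitions of $n$ into parts $\equiv0,\pm S\pmod R$ (so $p(n)\ge0$, $p(0)=1$, $p(n)=0$ for $n<0$), division and extraction of the coefficient of $q^N$ give
\begin{align}\label{finite}
\mathcal{C}^\prime_{R,S,k}(N)=(-1)^{k-1}\sum_{m=-k}^{k-1}(-1)^m\,p(N-\epsilon_m),
\end{align}
a finite alternating sum of $2k$ partition values, which is the object I would analyse throughout.

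Two elementary consequences of \eqref{finite} handle the low range uniformly in $(R,S,k)$. The partial bilateral sum agrees with the full theta sum through degree $\epsilon_k-1$ (the smallest exponent absent from the partial sum is $\epsilon_k$), so $\mathcal{C}^\prime_{R,S,k}(N)=0$ for $1\le N<\epsilon_k$, while at $N=\epsilon_k$ the coefficient equals $p(0)=1>0$. Peeling off the extreme indices $m=k-1,-k$ gives $\mathcal{C}^\prime_{R,S,k}(N)+\mathcal{C}^\prime_{R,S,k-1}(N)=p(N-\epsilon_{k-1})-p(N-\epsilon_{-k})\ge0$, the inequality holding for all $N$ because $\epsilon_{-k}-\epsilon_{k-1}=(2k-1)S$ and adjoining $(2k-1)$ parts equal to $S$ injects the partitions of $N-\epsilon_{-k}$ into those of $N-\epsilon_{k-1}$. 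Iterating from $\mathcal{C}^\prime_{R,S,0}\equiv0$ yields the clean form $\mathcal{C}^\prime_{R,S,k}(N)=\sum_{j=1}^{k}(-1)^{k-j}\big(p(N-\epsilon_{j-1})-p(N-\epsilon_{-j})\big)$ with nonnegative summands; in particular the case $k=1$ reads $\mathcal{C}^\prime_{R,S,1}(N)=p(N)-p(N-S)\ge0$ and is settled at once. The conjecture is thereby reduced to the nonnegativity of \eqref{finite} in the range $N\ge\epsilon_k$.

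For the bulk of this range I would make Wright's Circle Method effective and substitute it into \eqref{finite}. The series $(q^{S},q^{R-S},q^{R};q^{R})_\infty^{-1}$ has a single dominant singularity at $q=1$, and the saddle-point analysis gives $p(n)=M(n)+E(n)$ with a main term of the shape $M(n)=c(R,S)\,n^{-3/4}\exp\big(\pi\sqrt{2n/R}\big)$ and an explicit bound $|E(n)|\le\mathcal E(n)$ valid for all $n\ge1$. Expanding $M(N-\epsilon_m)$ and using the exact identities $\sum_{m=-k}^{k-1}(-1)^m=0$ and $\sum_{m=-k}^{k-1}(-1)^m\epsilon_m=(-1)^kkS$, the main-term combination collapses to $kS\,M^\prime(N)+\cdots$, a strictly positive quantity of size comparable to $M(N)/\sqrt N$; the error contributes at most $\sum_{m}\mathcal E(N-\epsilon_m)$. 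Comparing the two produces an explicit threshold $N_0=N_0(R,S,k)$ beyond which $\mathcal{C}^\prime_{R,S,k}(N)\ge0$, recovering the large-$N$ statement with effective constants.

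The decisive step—and the one I expect to be hardest—is to make this threshold compatible with the vanishing range, uniformly in all three parameters, since a per-triple finite check is meaningless over infinitely many $(R,S,k)$. Because $\epsilon_k\ge\tfrac{R}{2}k^2$ grows quadratically in $k$ while the square-root saving in the exponent of $\mathcal E$ should force $N_0$ to grow only linearly in $R$ and polynomially in $k$, the aim is to prove $N_0(R,S,k)\le\epsilon_k$ for all but finitely many triples; for such triples every potentially nonzero coefficient, being one with $N\ge\epsilon_k\ge N_0$, already lies in the asymptotic regime, so nothing remains to verify, and the finitely many exceptional triples are disposed of by direct evaluation of \eqref{finite}, completing the proof of Conjecture~\ref{c1}. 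The heart of the matter is that just above $\epsilon_k$ the sum \eqref{finite} is a slowly growing difference of exponentially large partition values, so the positive main term $kS\,M^\prime(N)$ must be separated from $\sum_m\mathcal E(N-\epsilon_m)$ by bounds whose constants remain uniform as $k$ increases and the $2k$ shifts $\epsilon_m$ spread out; obtaining such uniform effective estimates for partitions into the sparse set $\{0,\pm S\bmod R\}$, simultaneously in $R$, $S$, and $k$, is the principal obstacle.
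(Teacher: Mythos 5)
Your exact coefficient identity $\mathcal{C}^\prime_{R,S,k}(N)=(-1)^{k-1}\sum_{m=-k}^{k-1}(-1)^m p(N-\epsilon_m)$ with $\epsilon_m=\tfrac{R}{2}m(m+1)-Sm$ is correct (you have silently repaired the statement's typo, $(q^S,q^{R-S},q^R;q^R)_\infty$ in the denominator), as are the vanishing for $1\le N<\epsilon_k$, the value $1$ at $N=\epsilon_k$, the telescoping relation $\mathcal{C}^\prime_{R,S,k}(N)+\mathcal{C}^\prime_{R,S,k-1}(N)=p(N-\epsilon_{k-1})-p(N-\epsilon_{-k})\ge0$, and the moment identities $\sum_{m=-k}^{k-1}(-1)^m=0$ and $\sum_{m=-k}^{k-1}(-1)^m\epsilon_m=(-1)^kkS$; indeed your collapsed main term $kS\,M'(N)$ is consistent with the coefficient $\frac{\pi kS N^{-3/2}}{8\sqrt{2R}\sin(S\pi/R)}\,e^{2\pi\sqrt{N/(2R)}}$ in \eqref{thmain1eqp} (two slips: for the three-residue-class product the polynomial factor in $p(n)$ is $n^{-1}$, not $n^{-3/4}$, and the constant term is $(-1)^{k-1}$, so for even $k$ nonnegativity can only hold for $N\ge1$). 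But note the paper does not prove Conjecture \ref{c1} in full: it proves exactly the fixed-parameter, large-$N$ statement, by writing the generating function as $(-1)^{k-1}$ plus four series $L^\prime_{2R,(2k+1)R\mp2S,T_{1,2}}$, $L^\prime_{2R,(2k+3)R\mp2S,T_{3,4}}$ and applying Wright's circle method once at the level of generating functions; the complete conjecture is attributed to Yee \cite{yee} and Mao \cite{mao1}, whose proofs are combinatorial/$q$-series arguments, not effective analysis. Your proposal is therefore far more ambitious than the paper's, and it is precisely in the extra step that it fails.

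The genuine gap is your uniformity step, and it is not merely difficult but quantitatively untenable as formulated. Collapsing $\sum_m(-1)^mM(N-\epsilon_m)$ to $kS\,M'(N)$ by a two-term Taylor expansion discards contributions of size roughly $\epsilon_m^2M''(N)$ with $\epsilon_m\asymp Rk^2$; since $M^{(j)}(N)\asymp N^{-j/2}M(N)$, these dominate the main term $kS\,M'(N)\asymp kSN^{-1/2}M(N)$ unless $R^2k^4N^{-1}\ll kSN^{-1/2}$, i.e.\ $N\gg R^2k^6/S^2$ --- consistent with the restriction $k=o(n^{1/12})$ that Chern and Xia require in \eqref{akn}. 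Hence any threshold $N_0(R,S,k)$ extractable from this expansion grows at least like $k^6$, whereas $\epsilon_k\asymp Rk^2/2$, so for each fixed $(R,S)$ the hoped-for inequality $N_0\le\epsilon_k$ fails for \emph{all} sufficiently large $k$: your ``finitely many exceptional triples'' are in fact infinitely many, and in the window $\epsilon_k\le N\le N_0$ no argument you give applies --- the telescoped form $\sum_{j=1}^k(-1)^{k-j}\bigl(p(N-\epsilon_{j-1})-p(N-\epsilon_{-j})\bigr)$ is an \emph{alternating} sum of nonnegative differences, which yields positivity only for $k=1$. Rescuing the strategy would require uniform asymptotics for $p(N-\epsilon_m)$ when the shift is comparable to $N$, or a combinatorial injection in the style of Yee, neither of which is sketched. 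As it stands, your proposal establishes the case $k=1$, the exact low-range facts, and (once the circle-method error is made explicit for fixed parameters, which is routine) a rederivation of the paper's large-$N$ theorem --- but not Conjecture \ref{c1}.
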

Conjecture \ref{c1} was first proved by Yee \cite{yee} and the author \cite{mao1}, independently.
He, Ji and Zang \cite{ji} studied bilateral truncated Jacobi’s identity and generalized the result in Conjecture \ref{c1}.
Truncated Jacobi’s identities analogous to \eqref{trpen1} are obtained in \cite{wang1}.
 Merca \cite{mecra1,mecra2} proposed a stronger version of Conjecture \ref{c1}.
\begin{conjecture}\label{con1}
	For positive integers $1\leq S<R$ with $k\geq1$, let
\begin{align*}
\sum_{N=0}^\infty \mathcal{C}_{R,S,k}(N)q^N:=	&(-1)^k\frac{	\sum_{j=k}^{\infty}(-1)^{j}q^{Rj(j+1)/2}(q^{-jS}-q^{jS+S})}{(q^S,q^{R-S};q^R)_\infty}.
\end{align*}
Then $\mathcal{C}_{R,S,k}(N)\geq0$ for all $N\geq0$.
\end{conjecture}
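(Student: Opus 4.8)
The plan is to pass from the truncated series to a single quotient of an explicit theta-polynomial by the relevant product, apply Wright's Circle Method to that quotient to obtain a positive main term with effective error, and then dispose of the low-order coefficients by a degree count together with a finite verification. First I would rewrite $f(q):=\sum_{N\ge0}\mathcal{C}_{R,S,k}(N)q^N$ in closed form. Setting $c_j:=(-1)^jq^{Rj(j+1)/2-jS}$, the elementary identity $c_j+c_{-j-1}=(-1)^jq^{Rj(j+1)/2}(q^{-jS}-q^{jS+S})$ together with the reflection $j\mapsto -j-1$ turns the one-sided numerator into a bilateral theta minus its central block,
\[
\sum_{j=k}^\infty(-1)^jq^{Rj(j+1)/2}(q^{-jS}-q^{jS+S})=\sum_{j=-\infty}^{\infty}c_j-\sum_{j=-k}^{k-1}c_j .
\]
By the Jacobi triple product identity $\sum_{j=-\infty}^{\infty}c_j=(q^R,q^{R-S},q^S;q^R)_\infty$, so dividing by $(q^S,q^{R-S};q^R)_\infty$ collapses the bilateral part to $(q^R;q^R)_\infty$ and yields
\[
f(q)=(-1)^k(q^R;q^R)_\infty-(-1)^k\,\frac{P_k(q)}{(q^S,q^{R-S};q^R)_\infty},\qquad P_k(q):=\sum_{j=-k}^{k-1}(-1)^jq^{Rj(j+1)/2-jS}.
\]
The first summand has coefficients in $\{-1,0,1\}$ (pentagonal theorem in base $q^R$), while the second carries all of the exponential growth, and this is the form I would analyze.

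Second, I would apply Wright's Circle Method to the quotient $P_k(q)/(q^S,q^{R-S};q^R)_\infty$. Writing $q=e^{-t}$, the reciprocal of the product behaves like $A\,t^{b}e^{\lambda/t}$ with explicit constants $A,\lambda>0$ and exponent $b$ coming from the two residue classes $\pm S\pmod R$, while $P_k(1)=\sum_{j=-k}^{k-1}(-1)^j=0$ forces $P_k(e^{-t})=-Vt+O(t^2)$ with $V=\sum_{j=-k}^{k-1}(-1)^j\!\left(Rj(j+1)/2-jS\right)$. A short computation gives $(-1)^kV=kS>0$, and this is the structural heart of the positivity: after the outer sign the near-$q=1$ contribution of the second summand is $(-1)^kVt/(q^S,q^{R-S};q^R)_\infty\sim kS\,A\,t^{b+1}e^{\lambda/t}$, strictly positive. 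A saddle-point evaluation of $\mathcal{C}_{R,S,k}(N)=\frac{1}{2\pi i}\oint f(q)q^{-N-1}\,dq$ then produces a main term of the shape $kS\cdot c(R,S)\,N^{-\gamma}e^{2\sqrt{\lambda N}}$. Crucially, I would carry out the minor-arc estimate with completely explicit constants, so that the analysis returns an \emph{effective} threshold $N_0=N_0(R,S,k)$ with $\mathcal{C}_{R,S,k}(N)>0$ for every $N\ge N_0$; the bounded term $(-1)^k(q^R;q^R)_\infty$ is absorbed into the error once $N_0$ is large enough.

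Third, I would close the remaining low-order range. A degree count on the defining numerator shows its lowest term is $q^{Rk(k+1)/2-kS}$, with coefficient $+1$ after the outer sign, whence $\mathcal{C}_{R,S,k}(N)=0$ for all $N<Rk(k+1)/2-kS$; these are trivially nonnegative. What is left is the intermediate band $Rk(k+1)/2-kS\le N<N_0(R,S,k)$, a finite set of coefficients for each fixed triple, which can be checked directly. The genuine obstacle — and the reason the Circle Method by itself delivers only the large-$N$ statement — is that $(R,S,k)$ runs over an infinite family, so these finite checks must be made uniform: one needs $N_0(R,S,k)$ bounded explicitly, ideally polynomially, in the parameters, together with a reduction of the parameter range, for instance a monotonicity argument in $k$ for fixed $(R,S)$, or a transfer of the intermediate band to the already established Conjecture \ref{c1}. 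Making the effective threshold sharp enough that the band is provably nonnegative for all $(R,S,k)$ simultaneously is where the real difficulty lies; the reformulation and the asymptotic analysis of the first two stages are, by comparison, routine.
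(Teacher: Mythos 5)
Your first two stages are sound and I verified the key computations: the reflection $j\mapsto -j-1$ and the triple product do give $\sum_{j\geq k}(-1)^jq^{Rj(j+1)/2}(q^{-jS}-q^{jS+S})=(q^S,q^{R-S},q^R;q^R)_\infty-P_k(q)$, one checks $P_k(1)=0$, and pairing $j$ with $-j-1$ gives $(-1)^kV=kS$, which reproduces exactly the paper's leading coefficient in \eqref{thmain1eq}. But be aware that what this delivers is precisely Theorem \ref{thmain1}, the large-$N$ case, which is all the paper proves as well; at that level your route is genuinely different from, and in places simpler than, the paper's. The paper splits the tail into four partial theta functions $G_{2R,(2k+1)R\pm2S,\cdot}$ and $G_{2R,(2k+3)R\pm2S,\cdot}$, proves the general four-term Bessel expansion of Theorem \ref{thmain2} for each quotient $L_{a,c,d}$ — which requires the Euler--Maclaurin analysis of partial thetas near $q=1$ (Proposition \ref{tha}, Corollary \ref{led1}) and the minor-arc bound $|G_{a,c,d}(q)|=O(\sqrt{N})$ (Lemma \ref{paw}) — and only sees the coefficient $kS$ after the $I_{-1/2}$, $I_{-1}$, $I_{-3/2}$ terms cancel among the four $\mathcal{B}$'s. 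Your triple-product collapse moves that cancellation upstream: $P_k$ is a polynomial, so the expansion $-Vt+O(t^2)$ is a two-line Taylor computation, the partial-theta machinery disappears, and on the minor arc $|P_k|\leq 2k$ trivially; you still need the paper's two real technical inputs, Lemma \ref{led2} near $q=1$ and Lemma \ref{caw} away from it, plus Wright's Bessel-integral lemma \eqref{8141100}. What the paper's heavier $L_{a,c,d}$ framework buys is reusability: Theorem \ref{thmain2} handles $\mathcal{C}^\prime_{R,S,k}$ and the quintuple coefficients $\mathcal{D},\mathcal{D}^\prime$ of Theorem \ref{thmain3} with no new analysis, whereas your collapse must be redone per product identity (the quintuple identity would play the analogous role). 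Finally, your third stage is not a proof and you say so candidly: no effective, uniform $N_0(R,S,k)$ is produced, so like the paper you establish Conjecture \ref{con1} only for $N$ sufficiently large — you have correctly identified the finite-band uniformity as the genuine open obstacle rather than a routine appendix.
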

 Ballantine and B. Feigon \cite{ba} proved Conjecture \ref{con1} with $S=1, R=3$ and $k=1,2,3.$
 Ding and Sun \cite{ding} confirmed Conjecture \ref{con1} when $R=3S.$
The first result of this paper is the following asymptotic formulas for $\mathcal{C}^\prime_{R,S,k}(N)$ and $\mathcal{C}_{R,S,k}(N)$ from which Conjectures \ref{c1} and \ref{con1} follow for sufficiently large $N$ whenever $R, S, k$ are fixed.
Without loss of generality, we assume that $(R,S)=1$ in the rest of the paper.
\begin{theorem}\label{thmain1}
For arbitrary fixed positive integers
$k, 1\leq S<R$ and $(R,S)=1$, 
we have, as $N\rightarrow\infty$, 
\begin{align}
	\mathcal{C}_{R,S,k}(N)&=\frac{\pi k S N^{-5/4}}{4(3R)^{3/4}\sin\left(\frac{S\pi }{R}\right)}e^{2\pi\sqrt{\frac{N}{3R}}}
	+O\left(N^{-3/2}e^{2\pi\sqrt{\frac{N}{3R}}}\right)\label{thmain1eq}
	\intertext{and}
		\mathcal{C}^\prime_{R,S,k}(N)&=\frac{\pi k S N^{-3/2}}{8\sqrt{2R}\sin\left(\frac{S\pi }{R}\right)}e^{2\pi\sqrt{\frac{N}{2R}}}
	+O\left(N^{-2}e^{2\pi\sqrt{\frac{N}{2R}}}\right)\label{thmain1eqp}
	\end{align}
In particular, Conjectures \ref{c1} and \ref{con1} are true for $N$ large enough. 	
	\end{theorem}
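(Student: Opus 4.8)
The plan is to apply Wright's Circle Method to each generating function after first simplifying its numerator. Throughout set $q=e^{-t}$ with $t$ ranging over a small sector about the positive real axis, so that the dominant singularity $q\to1$ corresponds to $t\to0$. The first step is to rewrite the two numerators symmetrically. Under the reflection $j\mapsto-1-j$ the exponent $f(j):=Rj(j+1)/2-Sj$ is carried to $Rj(j+1)/2+Sj+S$ while the sign is reversed, $(-1)^j\mapsto-(-1)^j$; hence both numerators collapse, up to an explicit sign, to the ``head'' of the Jacobi theta sum
\[
\Theta_k(q):=\sum_{j=-k}^{k-1}(-1)^jq^{Rj(j+1)/2-Sj}.
\]
Explicitly, the numerator of $\mathcal{C}^\prime_{R,S,k}$ equals $(-1)^{k-1}\Theta_k(q)$, whereas for $\mathcal{C}_{R,S,k}$ the Jacobi triple product $\sum_{j=-\infty}^{\infty}(-1)^jq^{Rj(j+1)/2-Sj}=(q^S,q^{R-S},q^R;q^R)_\infty$ lets me write
\[
\sum_{N\ge0}\mathcal{C}_{R,S,k}(N)q^N=(-1)^k\Big[(q^R;q^R)_\infty-\frac{\Theta_k(q)}{(q^S,q^{R-S};q^R)_\infty}\Big].
\]

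Next I would pin down the behaviour as $t\to0$, which needs two ingredients. First, the asymptotics of the relevant infinite products, obtained from a Mellin-transform (equivalently Euler--Maclaurin, or modular transformation of the Dedekind eta function) analysis: after the $\log t$ contributions cancel, one gets the clean forms $(q^S,q^{R-S};q^R)_\infty\sim 2\sin(S\pi/R)\,e^{-\pi^2/(3Rt)}$ and $(q^R;q^R)_\infty\sim\sqrt{2\pi/(Rt)}\,e^{-\pi^2/(6Rt)}$, the factor $2\sin(S\pi/R)$ arising from $\Gamma(S/R)\Gamma(1-S/R)=\pi/\sin(S\pi/R)$. Second, a Taylor expansion of the head: since $\Theta_k(1)=0$ (there are $2k$ alternating terms) and the quadratic-in-$j$ part of $f$ again cancels under $j\mapsto-1-j$, one finds $\Theta_k(q)=-t\sum_{j=-k}^{k-1}(-1)^jf(j)+O(t^2)=(-1)^{k-1}kS\,t+O(t^2)$, using $\sum_{j=0}^{k-1}(-1)^j(2j+1)=(-1)^{k-1}k$. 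The $(q^R;q^R)_\infty$ term above is then exponentially negligible, and as $t\to0$
\[
\sum_N\mathcal{C}_{R,S,k}(N)q^N\sim\frac{kS}{2\sin(S\pi/R)}\,t\,e^{\pi^2/(3Rt)},\qquad
\sum_N\mathcal{C}^\prime_{R,S,k}(N)q^N\sim\frac{kS\sqrt{R}}{2\sqrt{2\pi}\,\sin(S\pi/R)}\,t^{3/2}\,e^{\pi^2/(2Rt)}.
\]

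I would then extract the coefficients by Cauchy's formula on the circle $|q|=e^{-t_0}$ and evaluate by the saddle-point method. Writing the exponent as $\phi(t)=c/t+Nt$ with $c=\pi^2/(3R)$ respectively $c=\pi^2/(2R)$, the saddle lies at $t_0=\sqrt{c/N}$ and $\phi(t_0)=2\sqrt{cN}$ reproduces $e^{2\pi\sqrt{N/(3R)}}$ and $e^{2\pi\sqrt{N/(2R)}}$. The Gaussian evaluation $\frac{1}{2\pi i}\int g(t)e^{\phi(t)}\,dt\sim g(t_0)e^{\phi(t_0)}/\sqrt{2\pi\phi''(t_0)}$, with $g$ the prefactor found above and $\phi''(t_0)=2c/t_0^3$, then yields precisely the main terms of \eqref{thmain1eq} and \eqref{thmain1eqp}; carrying one further order in the product asymptotics and in the expansion of $\Theta_k$ supplies the error terms $O(N^{-3/2}e^{\cdots})$ and $O(N^{-2}e^{\cdots})$.

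The main obstacle is not the formal saddle-point computation but the rigorous justification of the method, namely the minor-arc estimate: I must show that on the part of the circle where $q=e^{-t_0}e^{i\theta}$ with $\theta$ bounded away from $0$, the generating function is exponentially smaller than its value near $\theta=0$. This demands uniform upper bounds for $\Theta_k(q)$ and lower bounds for $|(q^S,q^{R-S};q^R)_\infty|$ (and for the full product in the primed case) away from $q=1$, exploiting that $\mathrm{Re}(1/t)$ is strictly maximized at $\theta=0$ together with control of the products near the other roots of unity; one also needs the major-arc expansion to hold uniformly for $|\theta|\le\theta_0$. Once the minor arc is absorbed into the error term, the non-negativity statements follow at once: for $1\le S<R$ we have $0<S\pi/R<\pi$, so $\sin(S\pi/R)>0$ and the main terms in \eqref{thmain1eq} and \eqref{thmain1eqp} are strictly positive; hence for all sufficiently large $N$ they dominate the error, giving $\mathcal{C}_{R,S,k}(N)>0$ and $\mathcal{C}^\prime_{R,S,k}(N)>0$ and thereby confirming Conjectures \ref{c1} and \ref{con1} for large $N$.
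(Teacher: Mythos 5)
Your proposal is correct in substance and its computations check out (I verified the reflection identities, the expansion $\Theta_k(e^{-t})=(-1)^{k-1}kSt+O(t^2)$, and the saddle-point constants, which reproduce the main terms of \eqref{thmain1eq} and \eqref{thmain1eqp} exactly), but it takes a genuinely different route from the paper. The paper does not invoke the Jacobi triple product for Theorem \ref{thmain1}: it removes the sign $(-1)^j$ from the tail $\sum_{j\ge k}$ by splitting into even and odd $j$, writes the numerator as a signed combination of four partial theta functions $G_{a,c,d}(q)=\sum_{j\ge 0}q^{aj^2+cj+d}$, and applies the general expansion of Theorem \ref{thmain2} to each of the four coefficient sequences $\mathcal{B}_{a,c,d}(N)$; the leading terms of the four Bessel expansions (orders $I_{-1/2}$, $I_{-1}$, $I_{-3/2}$) cancel, and the answer emerges only at order $I_{-2}$. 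Your reduction to the finite head $\Theta_k(q)$ via $j\mapsto-1-j$ makes that cancellation structural instead of computational: $\Theta_k(1)=0$ shows the numerator is small near $q=1$ before any analysis is done, you need no Euler--Maclaurin machinery for partial theta functions (Proposition \ref{tha} and Corollary \ref{led1} become unnecessary), and on the minor arc $|\Theta_k(q)|\le 2k$ trivially, replacing Lemma \ref{paw}. What the paper's heavier route buys is Theorem \ref{thmain2} as a reusable tool: it is applied again, unchanged, to prove Theorem \ref{thmain3} on the quintuple-product truncations, where the leftover sums are bilateral and infinite (a reflection trick would again reduce them to a finite sum, but you would have to redo that argument there).

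The one genuine gap is the step you yourself flag as ``the main obstacle'': the minor-arc bound on $1/(q^S,q^{R-S};q^R)_\infty$ (and on the primed product). This is precisely the paper's Lemma \ref{caw}, which is the longest and most delicate proof in the paper, so deferring it leaves your proposal a blueprint rather than a complete proof. Two points of caution if you carry it out. First, the minor arc in Wright's method is not ``$\theta$ bounded away from $0$'': with $y=\frac{1}{2\sqrt{3RN}}$ it is $y\le|x|\le\frac12$, so the bound must already hold for $|x|$ comparable to $y$, where $\mathrm{Re}(1/\tau)$ is smaller than its peak only by a constant factor; this is the regime the paper's Case (i) handles by comparing $|1/(q^A;q^B)_\infty|$ with $1/(|q|^A;|q|^B)_\infty$ and extracting the exponential loss $\exp\left(-(\sqrt{2}-1)/(4\pi By)\right)$. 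Second, for $|x|$ of moderate size the product can nearly vanish when $x$ is close to rationals with denominator $B$, and the paper must play the two families of rationals $\left\{j/B\right\}$ and $\left\{j/(B-A)\right\}$ against each other, using $(B-A,B)=1$, to obtain a uniform exponential saving. Once Lemma \ref{caw} (or an equivalent estimate) is supplied, the rest of your argument --- including the positivity conclusion from $\sin\left(\frac{S\pi}{R}\right)>0$, which settles Conjectures \ref{c1} and \ref{con1} for large $N$ --- goes through.
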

\emph{Remark}: Special cases of \eqref{thmain1eqp} with $(R,S)=(3,1), (4,1), (2,1)$ are first obtained by Chern \cite{chern0}.

 To prove Theorem \ref{thmain1}, we establish the following asymptotic formulas which have their own interests. 
\begin{theorem}\label{thmain2}
	Assume that $d$ is a non-negative integer and positive numbers $ a,c$ satisfying $aj^2+cj\in \mathbb{Z}$ for all non-negative integers $j$.
For $1\leq S<R$ with $(S,R)=1$, define \begin{align*}
		G_{a,c,d}(q):&=\sum_{j=0}^{\infty}q^{aj^2+cj+d},\\
		\sum_{N=0}^\infty \mathcal{B}_{a,c,d}(N)q^N&:=	L_{a,c,d}(q):=	\frac{		G_{a,c,d}(q)}{(q^S,q^{R-S};q^R)_\infty},\\
			\sum_{N=0}^\infty \mathcal{B}^\prime_{a,c,d}(N)q^N&:=	L_{a,c,d}^\prime(q):=	\frac{			G_{a,c,d}(q)}{(q^S,q^{R-S},q^R;q^R)_\infty}.
	\end{align*}
Then we have, as $N\rightarrow\infty$, 
\begin{align}
\mathcal{B}_{a,c,d}(N)&=\frac{\sqrt{\frac{\pi}{a}}}{4\sin\left(\frac{S\pi }{R}\right)}
	\times\left(\frac{\pi}{\sqrt{3RN}}\right)^{1/2}
	I_{-\frac{1}{2}}\left(2\pi\sqrt{\frac{N}{3R}}\right)\nonumber\\
	&\quad-\frac{B_1\left(\frac{c}{2a}\right)}{2\sin\left(\frac{S\pi }{R}\right)}\left(\frac{\pi}{\sqrt{3RN}}\right)
	I_{-1}\left(2\pi\sqrt{\frac{N}{3R}}\right)\nonumber\\
	&\quad -\frac{\sqrt{\frac{\pi}{a}}\left(d-\frac{c^2}{4a}+\frac{R}{12}-\frac{S}{2}+\frac{ S^2}{2R} \right)}{4\sin\left(\frac{S\pi }{R}\right)}
	\left(\frac{\pi}{\sqrt{3RN}}\right)^{3/2}
	I_{-\frac{3}{2}}\left(2\pi\sqrt{\frac{N}{3R}}\right)\nonumber\\
	&\quad +\frac{\left[	\left(d-\frac{c^2}{4a}+\frac{R}{12}-\frac{S}{2}+\frac{ S^2}{2R} \right)B_1\left(\frac{c}{2a}\right)+\frac{aB_3\left(\frac{c}{2a}\right)}{3}\right]}{2\sin\left(\frac{S\pi }{R}\right)}
	\left(\frac{\pi}{\sqrt{3RN}}\right)^{2}
	I_{-2}\left(2\pi\sqrt{\frac{N}{3R}}\right)\nonumber\\
	&\quad
	+O\left(N^{-\frac{3}{2}}e^{2\pi\sqrt{\frac{N}{3R}}}\right)\label{thmain2eq}
	\intertext{and}
	\mathcal{B}^\prime_{a,c,d}(N)&=\frac{\sqrt{\frac{R}{2a}}}{4\sin\left(\frac{S\pi }{R}\right)}
\left(\frac{\pi}{\sqrt{2RN}}\right)	I_{-1}\left(2\pi\sqrt{\frac{N}{2R}}\right)\nonumber\\
	&\quad-\frac{\sqrt{\frac{R}{2\pi}}B_1\left(\frac{c}{2a}\right)}{2\sin\left(\frac{S\pi }{R}\right)}\times \left(\frac{\pi}{\sqrt{2RN}}\right)^{3/2}
	I_{-3/2}\left(2\pi\sqrt{\frac{N}{2R}}\right)\nonumber\\
	&\quad-\frac{\sqrt{\frac{R}{2a}}	\left(d-\frac{c^2}{4a}+\frac{R}{8}-\frac{S}{2}+\frac{ S^2}{2R} \right)}{4\sin\left(\frac{S\pi }{R}\right)}\times \left(\frac{\pi}{\sqrt{2RN}}\right)^{2}	I_{-2}\left(2\pi\sqrt{\frac{N}{2R}}\right)
	\nonumber\\ &\quad+\frac{\left[	\left(d-\frac{c^2}{4a}+\frac{R}{8}-\frac{S}{2}+\frac{ S^2}{2R} \right)B_1\left(\frac{c}{2a}\right)+\frac{aB_3\left(\frac{c}{2a}\right)}{3}\right]\sqrt{\frac{R}{2\pi}}}{2\sin\left(\frac{S\pi }{R}\right)}
\no	\\&\quad\times \left(\frac{\pi}{\sqrt{2RN}}\right)^{5/2}	I_{-5/2}\left(2\pi\sqrt{\frac{N}{2R}}\right)
	+O\left(N^{-\frac{7}{4}}e^{2\pi\sqrt{\frac{N}{2R}}}\right),\label{thmain2eqp}
\end{align}
where $I_{\nu}(x)$ is the modified Bessel function and $B_n(x)$ is the $n$-th Bernoulli polynomial, defined by
$$\frac{te^{xt}}{e^t-1}=\sum_{n\geq0}B_n(x)\frac{t^n}{n!}.$$
\end{theorem}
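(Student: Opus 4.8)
The plan is to apply Wright's variant of the circle method to the Cauchy integral
\[
\mathcal{B}_{a,c,d}(N)=\frac{1}{2\pi i}\oint_{|q|=e^{-\rho}}\frac{L_{a,c,d}(q)}{q^{N+1}}\,dq ,
\]
with the substitution $q=e^{-t}$, $t=\rho+i\phi$, choosing the radius $\rho\sim \pi/\sqrt{3RN}$ so that the saddle of the integrand sits at $\phi=0$. I would split $[-\pi,\pi]$ into a major arc $|\phi|\le\phi_0$ and its complement; the entire content of the theorem is then carried by the behaviour of $L_{a,c,d}(e^{-t})$ as $t\to0$, which I would obtain from two independent expansions and feed into a Bessel integral.

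For the denominator I would write $\log (q^S,q^{R-S};q^R)_\infty=-\sum_{m\ge1}\tfrac1m\tfrac{e^{-mSt}+e^{-m(R-S)t}}{1-e^{-mRt}}$ and take Mellin transforms, obtaining $-\Gamma(s)\zeta(s+1)R^{-s}\bigl[\zeta(s,\tfrac SR)+\zeta(s,\tfrac{R-S}R)\bigr]$. Shifting the contour and collecting residues at $s=1,0,-1$ gives the expansion: the residue at $s=1$ produces the singular term $-\pi^2/(3Rt)$; the residue at $s=0$, which is only simple because $\zeta(0,\tfrac SR)+\zeta(0,\tfrac{R-S}R)=0$ so that no $\log t$ survives, produces the constant $\log\!\bigl(2\sin(S\pi/R)\bigr)$ via Lerch's formula $\zeta'(0,x)=\log(\Gamma(x)/\sqrt{2\pi})$ and the reflection formula; and the residue at $s=-1$ produces the linear term $\bigl(\tfrac{S^2}{2R}-\tfrac S2+\tfrac R{12}\bigr)t$ through $\zeta(-1,x)=-\tfrac12B_2(x)$. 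For the numerator I would complete the square $aj^2+cj+d=a(j+\tfrac c{2a})^2+(d-\tfrac{c^2}{4a})$ and apply Euler--Maclaurin to $\sum_{j\ge0}e^{-a(j+c/2a)^2t}$, which yields $G_{a,c,d}(e^{-t})=e^{-(d-c^2/4a)t}\bigl(\tfrac12\sqrt{\pi/(at)}-B_1(\tfrac c{2a})+\tfrac a3B_3(\tfrac c{2a})\,t+\cdots\bigr)$, the constant and linear coefficients organizing exactly into $B_1$ and $B_3$.

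Combining the two, $L_{a,c,d}(e^{-t})=\dfrac{e^{\pi^2/(3Rt)}}{2\sin(S\pi/R)}\,P(t)$, where $P(t)=\sum_e p_e t^{e}$ over $e\in\{-\tfrac12,0,\tfrac12,1,\dots\}$ and the $p_e$ are the stated combinations of $\sqrt{\pi/a}$, $B_1$, $B_3$ and $\Delta:=d-\tfrac{c^2}{4a}+\tfrac R{12}-\tfrac S2+\tfrac{S^2}{2R}$, the latter arising precisely from merging the two linear-in-$t$ contributions. Integrating each monomial against $e^{Nt}$ via $\frac1{2\pi i}\int t^{-\mu}e^{A/t+Nt}\,dt=(A/N)^{(1-\mu)/2}I_{\mu-1}(2\sqrt{AN})$ with $A=\pi^2/(3R)$ turns $t^{e}$ into $\bigl(\tfrac{\pi}{\sqrt{3RN}}\bigr)^{1+e}I_{-(1+e)}\bigl(2\pi\sqrt{N/(3R)}\bigr)$ and reproduces \eqref{thmain2eq}. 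The primed case runs identically, save that $L'_{a,c,d}$ carries the extra reciprocal factor $1/(q^R;q^R)_\infty$, whose $\eta$-transformation contributes $\sqrt{R/(2\pi)}\,t^{1/2}e^{\pi^2/(6Rt)}$; this both raises the exponential to $e^{\pi^2/(2Rt)}$ and shifts every power of $t$ up by $\tfrac12$, so the Bessel orders become $I_{-1},I_{-3/2},I_{-2},I_{-5/2}$, giving \eqref{thmain2eqp}.

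The hard part will be the minor-arc estimate: one must show that $|L_{a,c,d}(e^{-\rho-i\phi})|$ is exponentially smaller than its value at $\phi=0$ once $|\phi|\ge\phi_0$, so that the minor arc contributes only $O\bigl(N^{-3/2}e^{2\pi\sqrt{N/(3R)}}\bigr)$ (respectively $O\bigl(N^{-7/4}e^{2\pi\sqrt{N/(2R)}}\bigr)$), and to control the error incurred when completing the finite major-arc integral to the full vertical line in the Bessel identity and when truncating the expansions of $G_{a,c,d}$ and of the products. Verifying in addition that every root of unity other than $q=1$ yields a strictly smaller exponential, and is hence negligible, completes the technical core; the residue bookkeeping above is what pins down the exact constants $\tfrac{R}{12}-\tfrac S2+\tfrac{S^2}{2R}$, the factor $1/\sin(S\pi/R)$, and the $B_1,B_3$ coefficients.
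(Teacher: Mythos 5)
Your main-arc analysis is correct, and it reaches the key expansion by a genuinely different route from the paper. For the infinite product, the paper (Lemma \ref{led2}) rewrites $(q^S,q^{R-S};q^R)_\infty^{-1}$ in terms of $\eta(R\tau)$ and $\theta(S\tau;R\tau)$ and applies their modular transformations, obtaining the prefactor $e^{\pi i\tau\left(\frac R6-S+\frac{S^2}R\right)}e^{\frac{\pi i}{6R\tau}}\big/\bigl(2\sin\tfrac{S\pi}R\bigr)$ with an \emph{exponentially} small error; your Mellin--Hurwitz computation recovers exactly the same data (the pole at $s=1$ gives $\pi^2/(3Rt)$; the identity $\zeta(0,\tfrac SR)+\zeta(0,1-\tfrac SR)=0$ kills the logarithm and Lerch plus reflection produce the factor $1/(2\sin(S\pi/R))$; and $\zeta(-1,\cdot)=-\tfrac12B_2(\cdot)$ gives $\tfrac{S^2}{2R}-\tfrac S2+\tfrac R{12}$), at the cost of only power-saving errors, which still suffice at the stated precision. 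Your treatment of $G_{a,c,d}$ (complete the square, Euler--Maclaurin) coincides with the paper's (Proposition \ref{tha} and Corollary \ref{led1}, which quote an Euler--Maclaurin result of Bringmann--Jennings-Shaffer--Mahlburg), and your Bessel step, including the need to compare the truncated major-arc integral with the full Bessel contour, is exactly Proposition \ref{p1} (the paper quotes \cite[Lemma 4.2]{kk} for that comparison). One bookkeeping slip in the primed case: the extra factor $1/(q^R;q^R)_\infty$ contributes $e^{-Rt/24}\sqrt{Rt/(2\pi)}\,e^{\pi^2/(6Rt)}$, and it is the $e^{-Rt/24}$ you dropped that turns $\tfrac R{12}$ into the $\tfrac R8$ appearing in \eqref{thmain2eqp}.

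The genuine gap is the minor arc. You correctly identify it as the hard part, but you only state what must be shown; you give no mechanism for proving it, and it is the one step that does not follow from standard quotable facts. Note also that arguing ``at each root of unity other than $q=1$'' is not enough as stated: the arc $y\le|x|\le\tfrac12$ contains points near infinitely many rationals, and what is required is a bound uniform in $x$ over the whole arc. The paper supplies this in Lemma \ref{caw}: writing $\log\bigl((q^A;q^B)_\infty^{-1}\bigr)=\sum_{m\ge1}q^{Am}/\bigl(m(1-q^{Bm})\bigr)$, it compares the modulus termwise with the same series at $|q|$ and proves $\bigl|(q^A;q^B)_\infty^{-1}\bigr|\ll e^{C/y}\,(|q|^A;|q|^B)_\infty^{-1}$ with a negative constant $C$, via a case analysis that treats separately $x$ near multiples of $1/B$ and near multiples of $1/(B-A)$ --- the coprimality $(B-A,B)=1$ guarantees these two bad sets are disjoint, so near a bad point of one kind cancellation can be extracted from the first one or two terms of the series. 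Combined with the trivial bound $|G_{a,c,d}(q)|=O(\sqrt N)$ (Lemma \ref{paw}), this yields \eqref{8132150}, and the error arc becomes exponentially negligible (Proposition \ref{p2}). Until you prove an estimate of this type (or an equivalent uniform minor-arc bound), your argument establishes \eqref{thmain2eq} and \eqref{thmain2eqp} only conditionally.
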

To prove Theorem \ref{thmain2}, we apply a
variant of the Hardy–Ramanujan Circle Method
due to E. Wright \cite{w1,w2}. It is recently widely used in combinatorial analysis. See \cite{kk,kkin,bmr1,dou,maospt}, for example.
In particular, the author \cite{maok} applied Wright's Circle Method to establish asymptotic formulas for $k$-ranks of partitions whose generating function is given by 
\begin{align*}
	\frac{1}
	{(q;q)_\infty}\sum_{n=1}^{\infty}(-1)^{n-1}q^{n((2k-1)n-1)/2+ln}.
\end{align*}
One can use \eqref{thmain2eqp} with $(R,S)=(3,1)$ to recover the results in \cite{maok}. 
We will follow the presentation of \cite{maok} closely.
More recently,  Chern and Xia \cite{chern} used Wright's Circle Method to establish the following asymptotic formula, for $k\geq2$ and $k=o(n^{\frac{1}{12}})$, we have, as $n\rightarrow\infty$, 
 \begin{align}
 	a_k(n)&=\frac{\pi^2}{48\sqrt{3}}(k^3-k)n^{-2}e^{\frac{2\pi \sqrt{n}}{\sqrt{6}}}+O(k^6n^{-\frac{9}{4}}e^{\frac{2\pi \sqrt{n}}{\sqrt{6}}}),\label{akn}
 	\intertext{where $a_k(n)$ is defined by}
 	\sum_{n=0}^\infty a_k(n)q^n:&=\frac{(-1)^{k-1}}{(q;q)_\infty}\sum_{j=0}^{k-1}(-1)^j
 	q^{\frac{j(3j+1)}{2}}(1-q^{\frac{j(j+1)}{2}})(1-q^{2j+1}).\label{cherxia}
 	\end{align}
 As applications, they used \eqref{akn} to confirm conjectures of Andrews-Merca \cite{tram} and Merca-Yee \cite{merca3} for sufficiently large $n$. 
 The generating functions studied in this paper are quite different from that in \eqref{cherxia}. The products are more general and we consider infinite sums in Theorem \ref{thmain2}.
To apply Wright's Circle Method, the key is to study the asymptotic behavior of $L_{a,c,d}(q)$.
 The main difficulties come from the estimates of $G_{a,c,d}(q)$ (near $q=1$) and $\frac{1}{(q^A;q^B)_\infty}$ (when $q$ away from $1$) which are given in Corollary \ref{led1}, Lemma \ref{caw}, respectively.

As another application of Theorem \ref{thmain2}, we consider truncated series from the quintuple product identity.
Chan, Ho and the author \cite{chan} proved the non-negativity of the coefficients of two truncated series from quintuple product identity.
Merca \cite{mecra2} conjectured a stronger result:
%
\begin{conjecture} \label{con3}
	For $1 \leq S<R / 2$ and $k \geq 0$, define
		\begin{align*}
	\sum_{N=0}^\infty \mathcal{D}_{R,S,k}(N)q^N:=	& \left(q^R ; q^R\right)_{\infty}\left(q^{R-2 S}, q^{R+2 S} ; q^{2 R}\right)_{\infty} \\
		&\times \left(\frac{\sum_{n=-k}^k q^{n(3 n+1) R / 2}\left(q^{-3 n S}-q^{(3 n+1) S}\right)}{\left(q^S, q^{R-S}, q^R ; q^R\right)_{\infty}\left(q^{R-2 S}, q^{R+2 S} ; q^{2 R}\right)_{\infty}}-1\right),\\
			\sum_{N=0}^\infty \mathcal{D}^\prime_{R,S,k}(N)q^N:=	& \left(q^R ; q^R\right)_{\infty}\left(q^{R-2 S}, q^{R+2 S} ; q^{2 R}\right)_{\infty} \\
		& \left(\frac{\sum_{n=-k}^{k-1} q^{n(3 n+1) R / 2}\left(q^{-3 n S}-q^{(3 n+1) S}\right)}{\left(q^S, q^{R-S}, q^R ; q^R\right)_{\infty}\left(q^{R-2 S}, q^{R+2 S} ; q^{2 R}\right)_{\infty}}-1\right).
	\end{align*}
Then, for all $N\geq0$, $\mathcal{D}_{R,S,k}(N)\geq 0$ with $k\geq0$ and $\mathcal{D}^\prime_{R,S,k}(N)\leq0$ with $k\geq1.$ 
\end{conjecture}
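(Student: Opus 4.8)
The plan is to prove Conjecture \ref{con3} for \emph{every} $N\ge 0$ by reducing it to an explicitly sign-definite $q$-series identity, rather than by an asymptotic estimate; the Circle Method of Theorem \ref{thmain2} would yield the inequalities only for large $N$, so it is deliberately set aside here.

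\textbf{Step 1: Collapse the generating functions via the quintuple product identity.} Write
\[
\Theta_k=\sum_{n=-k}^{k} q^{Rn(3n+1)/2}\bigl(q^{-3nS}-q^{(3n+1)S}\bigr),\qquad D=(q^{S},q^{R-S},q^{R};q^{R})_\infty(q^{R-2S},q^{R+2S};q^{2R})_\infty.
\]
The quintuple product identity, taken with base $q^{R}$ and $z=q^{S}$, states precisely that $\Theta_\infty=D$, the very denominator appearing in the definition. Hence $\frac{\Theta_k}{D}-1=\frac{\Theta_k-\Theta_\infty}{D}$, and after cancelling the prefactor $(q^{R};q^{R})_\infty(q^{R-2S},q^{R+2S};q^{2R})_\infty$ against $D$ both generating functions collapse to a single tail quotient:
\[
\sum_{N\ge0}\mathcal{D}_{R,S,k}(N)q^{N}
=\frac{-\sum_{|n|>k} q^{Rn(3n+1)/2}\bigl(q^{-3nS}-q^{(3n+1)S}\bigr)}{(q^{S},q^{R-S};q^{R})_\infty},
\]
and likewise for $\mathcal{D}^\prime$ with the asymmetric tail $n\ge k$ together with $n\le-(k+1)$. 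Thus the conjecture is equivalent to determining the exact sign of the coefficients of these two quotients for all $N$.

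\textbf{Step 2: A truncated quintuple product identity.} I would next seek the quintuple-product analogue of the Andrews--Merca formula \eqref{trpen1}: an identity rewriting each tail quotient as $(-1)^{\varepsilon}$ times a series all of whose $q$-coefficients are visibly non-negative. Concretely I expect a form
\[
\frac{\sum_{|n|>k} q^{Rn(3n+1)/2}\bigl(q^{-3nS}-q^{(3n+1)S}\bigr)}{(q^{S},q^{R-S};q^{R})_\infty}
=(-1)^{\varepsilon}\sum_{m\ge1} c_m(q)\begin{bmatrix} \alpha_m \\ \beta_m \end{bmatrix}_{q^{R}},
\]
where each $c_m(q)$ is a monomial times a product of factors $1/(q^{R};q^{R})_{j}$ and the Gaussian binomials in base $q^{R}$ are non-negative. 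To produce such an identity I would split the tail into its two theta progressions $q^{-3nS}$ and $q^{(3n+1)S}$ arising from the two summands of the quintuple product, reindex each to start at $m=k+1$, and then expand $1/(q^{S},q^{R-S};q^{R})_\infty$ by a Bailey-pair / $q$-binomial argument mirroring the passage to the right-hand side of \eqref{trpen1}. Once the parity $\varepsilon$ is pinned down, non-negativity of the building blocks would give $\mathcal{D}_{R,S,k}(N)\ge0$ for $k\ge0$ and $\mathcal{D}^\prime_{R,S,k}(N)\le0$ for $k\ge1$, uniformly in $N$.

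\textbf{Alternative route and the main obstacle.} A second line of attack, avoiding any new identity, is to dissect the quintuple-product tail into truncated Jacobi triple products: Conjecture \ref{c1} is already a theorem valid for all $N$ (Yee \cite{yee}, Mao \cite{mao1}), and the quintuple product sum decomposes into Jacobi-triple-product pieces in a standard way. If the dissection can be arranged so that the resulting truncated Jacobi pieces carry a common sign, the inequalities for $\mathcal{D}$ and $\mathcal{D}^\prime$ follow for every $N$. In either route Step 2 is the genuine difficulty. Unlike the pentagonal and Jacobi settings, the quintuple product carries the extra base-$q^{2R}$ factors $(q^{R-2S},q^{R+2S};q^{2R})_\infty$; although these cancel from the final quotient, they block a direct transcription of the Andrews--Merca manipulation, and the crux is to reindex the two theta progressions so that they remain sign-aligned and no internal cancellation destroys the non-negativity of the coefficients. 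This alignment is exactly what separates Merca's strong Conjecture \ref{con3} from the weaker non-negativity already established in \cite{chan}.
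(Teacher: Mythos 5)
Your Step 1 is correct and coincides exactly with the paper's reduction: using the quintuple product identity with base $q^R$, $z=q^S$ (the paper cites \cite[Eq.\ (1.5)]{chan}), the prefactor $(q^R;q^R)_\infty(q^{R-2S},q^{R+2S};q^{2R})_\infty$ cancels against the denominator and both generating functions collapse to $-1$ times the tail $\sum_{|n|>k}$ (resp.\ the asymmetric tail) divided by $(q^S,q^{R-S};q^R)_\infty$. But from that point on your proposal contains a genuine gap: Step 2 is not a proof but a conjecture. The truncated quintuple-product identity you posit, with an expansion into manifestly non-negative Gaussian-binomial building blocks in base $q^R$ and a pinned-down parity $\varepsilon$, is never stated precisely, let alone proved; the "Bailey-pair / $q$-binomial argument mirroring \eqref{trpen1}" is exactly the step that is not known to carry over, as you yourself concede in your closing paragraph. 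The alternative route fares no better: the known dissections of the quintuple tail into Jacobi-triple-product pieces do not produce pieces with a common sign and matching denominators (Conjecture \ref{c1} concerns the denominator $(q^S,q^{R-S},q^R;q^R)_\infty$, while the tail quotient here has denominator $(q^S,q^{R-S};q^R)_\infty$), and arranging the required sign alignment is precisely the obstruction that limited \cite{chan} to the weaker non-negativity statement. So as written you have a reduction plus a research program, not a proof of Conjecture \ref{con3} for any range of $N$.

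By contrast, the paper never claims the conjecture for all $N$: having performed your Step 1, it splits each tail into four partial theta series $G_{\frac{3R}{2},\,\cdot\,,H_i}(q)$, applies the four-term Bessel expansion \eqref{thmain2eq} of Theorem \ref{thmain2} to each $\mathcal{B}_{\frac{3R}{2},\,\cdot\,,H_i}(N)$, and observes that the first three Bessel-term groups cancel among the four pieces, leaving the $I_{-2}$ term with coefficient $\frac{(2k+1)S}{2\sin(S\pi/R)}$ for $\mathcal{D}_{R,S,k}$ and $-\frac{2kS}{\sin(S\pi/R)}$ for $\mathcal{D}^\prime_{R,S,k}$; the asymptotics \eqref{thmain3eq} and \eqref{thmain3eqp} then give the asserted signs for all sufficiently large $N$ only. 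If you wish to salvage your argument in the spirit of the assignment, the honest conclusion is that by discarding the Circle Method you discarded the only mechanism currently available for extracting the sign, and your Step 2 would need the missing identity supplied and proved before either inequality follows for a single value of $N$.
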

Applying \eqref{thmain2eq}, we find the following asymptotic formulas for $\mathcal{D}_{R,S,k}(N)$ and $\mathcal{D}^\prime_{R,S,k}(N)$. 
\begin{theorem}\label{thmain3}
	For arbitrary fixed positive integers
	$1 \leq S<R / 2$ and $(R,S)=1$, 
	we have, as $N\rightarrow\infty$, 
	\begin{align}
		\mathcal{D}_{R,S,k}(N)&=\frac{\pi (2k+1) S N^{-5/4}}{4(3R)^{3/4}\sin\left(\frac{S\pi }{R}\right)}e^{2\pi\sqrt{\frac{N}{3R}}}
		+O\left(N^{-3/2}e^{2\pi\sqrt{\frac{N}{3R}}}\right)\label{thmain3eq}
		\intertext{with $k\geq0$ and}
			\mathcal{D}^\prime_{R,S,k}(N)&=-\frac{\pi k S N^{-5/4}}{(3R)^{3/4}\sin\left(\frac{S\pi }{R}\right)}e^{2\pi\sqrt{\frac{N}{3R}}}
			+O\left(N^{-3/2}e^{2\pi\sqrt{\frac{N}{3R}}}\right)\label{thmain3eqp}
	\end{align}
with $k\geq1.$
	In particular, Conjectures \ref{con3} is true for $N$ large enough. 	
\end{theorem}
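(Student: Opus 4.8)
The plan is to prove Theorem \ref{thmain3} by reducing both generating functions to the setting of Theorem \ref{thmain2} and then reading off the asymptotics from \eqref{thmain2eq}. First I would invoke the quintuple product identity in the shape
\[\sum_{n=-\infty}^{\infty} q^{n(3n+1)R/2}\left(q^{-3nS}-q^{(3n+1)S}\right)=\left(q^S,q^{R-S},q^R;q^R\right)_\infty\left(q^{R-2S},q^{R+2S};q^{2R}\right)_\infty,\]
which is exactly the identity underlying \cite{chan}. Dividing the defining expression for $\mathcal{D}_{R,S,k}$ by the prefactor and cancelling the common products, the constant $1$ is replaced by the full bilateral sum, so the truncation collapses onto its tail:
\[\sum_{N=0}^{\infty}\mathcal{D}_{R,S,k}(N)q^N=-\frac{\sum_{|n|>k}q^{n(3n+1)R/2}\left(q^{-3nS}-q^{(3n+1)S}\right)}{\left(q^S,q^{R-S};q^R\right)_\infty},\]
and similarly for $\mathcal{D}^\prime_{R,S,k}$ with the tail ranging over $n\geq k$ together with $n\leq-(k+1)$. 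In particular, no separate estimate of the ``$-1$'' term is required.

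Next I would split each tail into four theta pieces: by $n\geq k+1$ for $\mathcal{D}_{R,S,k}$ (resp.\ $n\geq k$ for $\mathcal{D}^\prime_{R,S,k}$) and by $-n\geq k+1$ (for both), carrying the $+q^{-3nS}$ and $-q^{(3n+1)S}$ summands separately. In each piece the exponent is a quadratic $\tfrac{3R}{2}j^2+c_X^{(0)}j+d_X^{(0)}$ in the summation index; after the substitution $j=(k+1)+i$ (or $j=k+i$) each becomes $\sum_{i\geq0}q^{ai^2+c_Xi+d_X}=G_{a,c_X,d_X}(q)$ with the common value $a=\tfrac{3R}{2}$, and the hypothesis $ai^2+c_Xi\in\mathbb{Z}$ holds since all original exponents are integers. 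Thus $\mathcal{D}_{R,S,k}(N)$ is the alternating combination $-\mathcal{B}_{a,c_A,d_A}(N)+\mathcal{B}_{a,c_B,d_B}(N)-\mathcal{B}_{a,c_C,d_C}(N)+\mathcal{B}_{a,c_D,d_D}(N)$, to which \eqref{thmain2eq} applies term by term.

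The delicate point, and the main obstacle, is that the target order in \eqref{thmain3eq} and \eqref{thmain3eqp} is $N^{-5/4}e^{2\pi\sqrt{N/(3R)}}$, which is the \emph{fourth} term of \eqref{thmain2eq}: the first three terms must cancel identically in the alternating combination. I would verify this by three successive simplifications. The leading Bessel term of \eqref{thmain2eq} is independent of $c,d$, so the signs $-,+,-,+$ annihilate it. The second term carries $B_1\!\left(\tfrac{c}{2a}\right)$, and since $B_1(x)=x-\tfrac12$ while $c_A^{(0)}-c_B^{(0)}+c_C^{(0)}-c_D^{(0)}=0$ and the $k$-shifts cancel under the alternating signs, it vanishes. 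The third term carries the shift-invariant vertex value $d_X-\tfrac{c_X^2}{4a}$; using $\left(\tfrac R2+3S\right)^2-\left(\tfrac R2-3S\right)^2=6RS$ one finds that all four vertex values in fact coincide, so this term cancels as well.

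Finally I would evaluate the surviving fourth term, whose coefficient is $\left(d-\tfrac{c^2}{4a}+\tfrac{R}{12}-\tfrac S2+\tfrac{S^2}{2R}\right)B_1\!\left(\tfrac{c}{2a}\right)+\tfrac a3 B_3\!\left(\tfrac{c}{2a}\right)$, with the $k$-dependence entering through $\tfrac{c_X}{2a}=(k+1)+\tfrac{c_X^{(0)}}{3R}$. Because the four vertex values coincide, the $B_1$-part again cancels, and everything reduces to $\tfrac R2\bigl[-B_3(t_A)+B_3(t_B)-B_3(t_C)+B_3(t_D)\bigr]$; writing $s=S/R$ and using $B_3(w+s)-B_3(w-s)=s\left(6w^2-6w+2s^2+1\right)$ this telescopes to $S(2k+1)$ for $\mathcal{D}_{R,S,k}$ and to $-4Sk$ for $\mathcal{D}^\prime_{R,S,k}$, the asymmetric ranges ($n\geq k$ versus $n\geq k+1$) producing the different constant. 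Multiplying by the common factor $\tfrac{\pi}{4(3R)^{3/4}\sin(S\pi/R)}N^{-5/4}e^{2\pi\sqrt{N/(3R)}}$ arising from the asymptotics of $I_{-2}$ then yields \eqref{thmain3eq} and \eqref{thmain3eqp}, with error term inherited from the $O$-term of \eqref{thmain2eq}. The positivity of $\mathcal{D}_{R,S,k}(N)$ and the negativity of $\mathcal{D}^\prime_{R,S,k}(N)$ for large $N$, hence Conjecture \ref{con3} in that range, follow at once from the sign of the leading coefficient.
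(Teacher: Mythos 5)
Your proposal is correct and follows essentially the same route as the paper: both invoke the quintuple product identity to collapse the truncated series onto its bilateral tails, decompose those tails into four partial theta functions $G_{3R/2,c_X,d_X}(q)$, apply \eqref{thmain2eq} to each, and extract the surviving $I_{-2}$ term after cancellation. The only difference is that you spell out the cancellation of the first three terms and the Bernoulli-polynomial telescoping explicitly (correctly yielding $S(2k+1)$ and $-4Sk$), whereas the paper compresses this into ``invoking \eqref{thmain2eq} and simplifying.''
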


This article is organized as follows. We provide proofs of Theorems \ref{thmain1} and \ref{thmain3} in Section \ref{sth1}.
Since the proofs of \eqref{thmain2eq} and \eqref{thmain2eqp} are completely similar, we only give detailed proof of \eqref{thmain2eq}. 
In Section \ref{zh1}, we study the asymptotic behavior of $L_{a,c,d}(q)$.
We use Wright's Circle Method to complete the proof of Theorem \ref{thmain2} in Section \ref{zh2} .

\section{Proofs of Theorems \ref{thmain1} and \ref{thmain3}}\label{sth1}
In this section, we use Theorem \ref{thmain2} to prove Theorems \ref{thmain1} and \ref{thmain3}.
 
 \subsection{Proof of \eqref{thmain1eq}.} Note that
 \begin{align*}
 	&	\sum_{j=k}^{\infty}(-1)^{j}q^{Rj(j+1)/2}(q^{-jS}-q^{jS+S})\\&=(-1)^kq^{\frac{Rk(k+1)}{2}-Sk}	\sum_{j=0}^{\infty}(-1)^jq^{Rj(j+1)/2+(Rk-S)j}(1-q^{(2k+1)S+2Sj})\\
 	&=(-1)^kq^{\frac{Rk(k+1)}{2}-Sk}	\sum_{j=0}^{\infty}q^{Rj(2j+1)+2(Rk-S)j}(1-q^{(2k+1)S+4Sj})\\&\quad-(-1)^kq^{\frac{Rk(k+1)}{2}-Sk}	\sum_{j=0}^{\infty}q^{R(2j+1)(j+1)+(Rk-S)(2j+1)}(1-q^{(2k+1)S+2S(2j+1)})\\
 	&=(-1)^k\Bigg[G_{2R,(2k+1)R-2S,T_1}(q)-G_{2R,(2k+1)R+2S,T_2}(q)\\&\qquad\qquad-G_{2R,(2k+3)R-2S,T_3}(q)+G_{2R,(2k+3)R+2S,T_4}(q)\Bigg]
 \end{align*}
with 
\begin{align}
	T_1:&=\frac{Rk(k+1)}{2}-Sk, \label{t1}
\\
	T_2:&=\frac{Rk(k+1)}{2}+(k+1)S,\label{t2}\\
		T_3:&=\frac{R(k+2)(k+1)}{2}-S(k+1) \label{t3}
	\intertext{and}
	T_4:&=\frac{R(k+2)(k+1)}{2}+(k+2)S.\label{t4}
	\end{align}
 It follows
 \begin{align}
 	\sum_{n=0}^\infty \mathcal{C}_{R,S,k}(N)q^N&=	(-1)^k\frac{	\sum_{j=k}^{\infty}(-1)^{j}q^{Rj(j+1)/2}(q^{-jS}-q^{jS+S})}{(q^S,q^{R-S};q^R)_\infty}\no
 	\\&=	L_{2R,(2k+1)R-2S,T_1}(q)-L_{2R,(2k+1)R+2S,T_2}(q)\no\\&\qquad\qquad-L_{2R,(2k+3)R-2S,T_3}(q)+L_{2R,(2k+3)R+2S,T_4}(q)\no.
 \end{align}
Then
\begin{align}
	\mathcal{C}_{R,S,k}(N)&=		\mathcal{B}_{2R,(2k+1)R-2S,T_1}(N)-	\mathcal{B}_{2R,(2k+1)R+2S,T_2}(N)\no\\&\quad-	\mathcal{B}_{2R,(2k+3)R-2S,T_3}(N)+	\mathcal{B}_{2R,(2k+3)R+2S,T_4}(N)\no.
\end{align}
Invoking \eqref{thmain2eq} and noting that
\begin{align*}
	B_1(x)=x-\frac{1}{2},\qquad B_3(x)=x^3-\frac{3}{2}x^2+\frac{x}{2},
	\end{align*} after simplifying, we find that
\begin{align}
	\mathcal{C}_{R,S,k}(N)
	 &=\frac{kS}{2\sin\left(\frac{S\pi }{R}\right)}\times \left(\frac{\pi}{\sqrt{3RN}}\right)^{2}
	 I_{-2}\left(2\pi\sqrt{\frac{N}{3R}}\right)\nonumber\\
	 &\quad
	 +O\left(N^{-3/2}e^{2\pi\sqrt{\frac{N}{3R}}}\right),\label{pthmain2eq}
	\end{align}
as $N\rightarrow\infty$.

By \cite[Eq (4.12.7)]{ar}, we know that, as $x\rightarrow\infty$ (which hold for any index $\nu$),
\begin{align}\label{besl}
	I_{\nu}(x)=\frac{e^{x}}{\sqrt{2\pi x}}+O\left(\frac{e^{x}}{x^{\frac{3}{2}}}\right).
\end{align}
Replacing $x$ by $2\pi\sqrt{\frac{N}{3R}}$ gives
\begin{align}
	I_{\nu}\left(2\pi\sqrt{\frac{N}{3R}}
	\right)=\frac{(3R)^{1/4}N^{-1/4}e^{2\pi\sqrt{\frac{N}{3R}}}}{2\pi}
	+O\left(N^{-3/4}e^{2\pi\sqrt{\frac{N}{3R}}}\right)\quad(\textrm{as } N\rightarrow\infty)\label{eqi}.
\end{align}
This together with \eqref{pthmain2eq} implies \eqref{thmain1eq}
.

\subsection{Proof of \eqref{thmain1eqp}.}
From the proof of \cite[Theorem 1.2]{mao1},
 we find that
	\begin{align*}
	 	\sum_{n=0}^\infty \mathcal{C}^\prime_{R,S,k}(N)(N)q^N	&=(-1)^{k-1}\frac{\sum_{j=0}^{k-1}(-1)^{j}
			q^{Rj(j+1)/2-Sj}\left(1-q^{(2j+1)S}\right)}
		{(q^R,q^{R-S},q^R;q^R)_\infty}\\&=(-1)^{k-1}+(-1)^{k}
		\frac{\sum_{j=k}^{\infty}(-1)^{j}
			q^{Rj(j+1)/2-Sj}\left(1-q^{(2j+1)S}\right)}
		{(q^R,q^{R-S},q^R;q^R)_\infty}
	.
	\end{align*}
Proceed as in the proof of \eqref{thmain1eq} to obtain 
	\begin{align*}
	\sum_{n=0}^\infty \mathcal{C}^\prime_{R,S,k}(N)(N)q^N		&=(-1)^{k-1}+	L^\prime_{2R,(2k+1)R-2S,T_1}(q)-L^\prime_{2R,(2k+1)R+2S,T_2}(q)\no\\&\quad-L^\prime_{2R,(2k+3)R-2S,T_3}(q)+L^\prime_{2R,(2k+3)R+2S,T_4}(q)\no
	,
\end{align*}
where $T_1, T_2, T_3, T_4$ are given in \eqref{t1}-\eqref{t4}, respectively.
Then, for $N\geq1$, we have
\begin{align}
	\mathcal{C}^\prime_{R,S,k}(N)&=		\mathcal{B}^\prime_{2R,(2k+1)R-2S,T_1}(N)-	\mathcal{B}^\prime_{2R,(2k+1)R+2S,T_2}(N)\no\\&\quad-	\mathcal{B}^\prime_{2R,(2k+3)R-2S,T_3}(N)+	\mathcal{B}^\prime_{2R,(2k+3)R+2S,T_4}(N)\no.
\end{align}
Invoke \eqref{thmain2eqp} and simplify to obtain
\begin{align}
	\mathcal{C}^\prime_{R,S,k}(N)
&=	\frac{kS\sqrt{\frac{R}{2\pi}}}{2\sin\left(\frac{S\pi }{R}\right)}\times \left(\frac{\pi}{\sqrt{2RN}}\right)^{5/2}
	I_{-5/2}\left(2\pi\sqrt{\frac{N}{2R}}\right)\nonumber\\
	&\quad
	+O\left(N^{-7/4}e^{2\pi\sqrt{\frac{N}{2R}}}\right),\no
\end{align}
which together with \eqref{besl} gives \eqref{thmain1eqp}.

\subsection{Proof of \eqref{thmain3eq}.}
The quintuple product identity gives 
\begin{align}
&	\no\sum_{n=-\infty}^\infty q^{n(3 n+1) R / 2}\left(q^{-3 n S}-q^{(3 n+1) S}\right)\\&=\left(q^S, q^{R-S}, q^R ; q^R\right)_{\infty}\left(q^{R-2 S}, q^{R+2 S} ; q^{2 R}\right)_{\infty} \no\qquad\qquad\qquad\textrm{(see \cite[Eq. (1.5)]{chan}).}
	\end{align}
It follows 
	\begin{align*}
	\sum_{n=0}^\infty \mathcal{D}_{R,S,k}(N)q^N=	& \left(q^R ; q^R\right)_{\infty}\left(q^{R-2 S}, q^{R+2 S} ; q^{2 R}\right)_{\infty} \\
	&\times \left(\frac{\sum_{n=-k}^k q^{n(3 n+1) R / 2}\left(q^{-3 n S}-q^{(3 n+1) S}\right)}{\left(q^S, q^{R-S}, q^R ; q^R\right)_{\infty}\left(q^{R-2 S}, q^{R+2 S} ; q^{2 R}\right)_{\infty}}-1\right)\\=	& 
-\frac{\sum_{n=-\infty}^{-k-1} q^{n(3 n+1) R / 2}\left(q^{-3 n S}-q^{(3 n+1) S}\right)}{\left(q^S, q^{R-S}; q^R\right)_{\infty}}
\\&\quad-\frac{\sum_{n=k+1}^\infty q^{n(3 n+1) R / 2}\left(q^{-3 n S}-q^{(3 n+1) S}\right)}{\left(q^S, q^{R-S}; q^R\right)_{\infty}}.
\end{align*}
Note that
	\begin{align*}
	&	\sum_{n=-\infty}^{-k-1} q^{n(3 n+1) R / 2}\left(q^{-3 n S}-q^{(3 n+1) S}\right)+\sum_{n=k+1}^\infty q^{n(3 n+1) R / 2}\left(q^{-3 n S}-q^{(3 n+1) S}\right)\\
		&=G_{\frac{3R}{2},\frac{(6k+5)R}{2}+3S,H_1}(q)-G_{\frac{3R}{2},\frac{(6k+5)R}{2}-3S,H_2}(q)+G_{\frac{3R}{2},\frac{(6k+7)R}{2}-3S,H_3}(q)\\&\quad-G_{\frac{3R}{2},\frac{(6k+7)R}{2}+3S,H_4}(q)
\end{align*}
with 
\begin{align*}
	H_1:&=\frac{R(3k+2)(k+1)}{2}+S(3k+3), 
	\\
	H_2:&=\frac{R(3k+2)(k+1)}{2}-S(3k+2),
	\\
	H_3:&=\frac{R(3k+4)(k+1)}{2}-S(3k+3)
	\intertext{and}
	H_4:&=\frac{R(3k+4)(k+1)}{2}+S(3k+4).
\end{align*}
Then
\begin{align*}
	\sum_{n=0}^\infty \mathcal{D}_{R,S,k}(N)q^N&=	-L_{\frac{3R}{2},\frac{(6k+5)R}{2}+3S,H_1}(q)+L_{\frac{3R}{2},\frac{(6k+5)R}{2}-3S,H_2}(q)\\&\quad-L_{\frac{3R}{2},\frac{(6k+7)R}{2}-3S,H_3}(q)+L_{\frac{3R}{2},\frac{(6k+7)R}{2}+3S,H_4}(q),
\end{align*}
which gives
\begin{align*}
 \mathcal{D}_{R,S,k}(N)&=	-\mathcal{B}_{\frac{3R}{2},\frac{(6k+5)R}{2}+3S,H_1}(N)+\mathcal{B}_{\frac{3R}{2},\frac{(6k+5)R}{2}-3S,H_2}(N)\\&\quad-\mathcal{B}_{\frac{3R}{2},\frac{(6k+7)R}{2}-3S,H_3}(N)+\mathcal{B}_{\frac{3R}{2},\frac{(6k+7)R}{2}+3S,H_4}(N).
\end{align*}
Invoking \eqref{thmain2eq} and simplifying yields
\begin{align*}
	\mathcal{D}_{R,S,k}(N)&=\frac{(2k+1)S}{2\sin\left(\frac{S\pi }{R}\right)}\times \left(\frac{\pi}{\sqrt{3RN}}\right)^{2}
	I_{-2}\left(2\pi\sqrt{\frac{N}{3R}}\right)\nonumber\\
	&\quad
	+O\left(N^{-3/2}e^{2\pi\sqrt{\frac{N}{3R}}}\right),
\end{align*}
which together with \eqref{eqi} gives \eqref{thmain3eq}.
\subsection{Proof of \eqref{thmain3eqp}}
Proceeding as in the proof of \eqref{thmain3eq}, we find that
\begin{align*}
	\mathcal{D}^\prime_{R,S,k}(N)&=	-\mathcal{B}_{\frac{3R}{2},\frac{(6k+5)R}{2}+3S,H_1}(N)+\mathcal{B}_{\frac{3R}{2},\frac{(6k+5)R}{2}-3S,H_2}(N)\\&\quad-\mathcal{B}_{\frac{3R}{2},\frac{(6k+1)R}{2}-3S,H^\prime_3}(N)+\mathcal{B}_{\frac{3R}{2},\frac{(6k+1)R}{2}+3S,H^\prime_4}(N)
\end{align*}
with
\begin{align}
	H^\prime_3:&=\frac{Rk(3k+1)}{2}-3kS \no
	\intertext{and}
	H^\prime_4:&=\frac{Rk(3k+1)}{2}+S(3k+1).\no
\end{align}
Apply \eqref{thmain2eq} to obtain
\begin{align*}
	\mathcal{D}^\prime_{R,S,k}(N)&=-\frac{2kS}{\sin\left(\frac{S\pi }{R}\right)}\times \left(\frac{\pi}{\sqrt{3RN}}\right)^{2}
	I_{-2}\left(2\pi\sqrt{\frac{N}{3R}}\right)\nonumber\\
	&\quad
	+O\left(N^{-3/2}e^{2\pi\sqrt{\frac{N}{3R}}}\right),
\end{align*}
which together with \eqref{eqi} gives \eqref{thmain3eqp}.
\section{ASYMPTOTIC BEHAVIOR OF $L_{a,c,d}(q)$}\label{zh1}
In this section, we study the asymptotic behavior of $L_{a,c,d}(q)$ when $q$ approaches roots of unity. Set $q=e^{2\pi i\tau}$ with $\tau=x+iy$ and $y>0$.
Since the asymptotic behavior is largely controlled by the exponential singularities of $(q^S,q^R;q^R)_\infty^{-1}$,
the dominant pole is at $q=1$.

\subsection{Asymptotic behavior of $L_{a,c,d}(q)$ near the dominant pole.}\label{ab}
First, we consider the asymptotic behavior of $G_{a,c,d}(q)$ near $q=1$. 
Recall \cite[Theorem 1.2]{za}:
\begin{theorem}\label{thbr}
Suppose that $0 \leq \mu<\frac{\pi}{2}$ and let $D_\mu:=\left\{r \mathrm{e}^{i \alpha}: r \geq 0\right.$ and $\left.|\alpha| \leq \mu\right\}$. Let $f: \mathbb{C} \rightarrow \mathbb{C}$ be holomorphic in a domain containing $D_\theta$, so that in particular $f$ is holomorphic at the origin, and assume that $f$ and all of its derivatives are of sufficient
decay. Then for $a \in \mathbb{R}$ and $N \in \mathbb{N}_0$,
\begin{align}
\sum_{m \geq 0} f(w(m+a))=\frac{1}{w} \int_0^{\infty} f(x) \mathrm{d} x-\sum_{n=0}^{N-1} \frac{B_{n+1}(a) f^{(n)}(0)}{(n+1)!} w^n+O_N\left(w^N\right),\label{asbr}
\end{align}
uniformly, as $w \rightarrow 0$ in $D_\mu$. 
\end{theorem}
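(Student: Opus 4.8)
The plan is to prove \eqref{asbr} by the Mellin transform method, since the Bernoulli polynomials $B_{n+1}(a)$ on the right-hand side are precisely the special values of the Hurwitz zeta function. First I would introduce the Mellin transform
\[
\widetilde f(s):=\int_0^\infty f(x)\,x^{s-1}\,dx,
\]
which converges absolutely for $\Re(s)>0$: the rapid decay of $f$ handles $x\to\infty$, while holomorphy at the origin keeps $f$ bounded near $x=0$. Splitting the integral at $x=1$ and inserting the Taylor expansion $f(x)=\sum_{k\ge0}\frac{f^{(k)}(0)}{k!}x^k$ into the piece over $(0,1)$ shows that $\widetilde f$ continues to a meromorphic function on $\mathbb{C}$ whose only singularities are simple poles at $s=-n$ $(n\ge0)$ with $\operatorname{Res}_{s=-n}\widetilde f(s)=\frac{f^{(n)}(0)}{n!}$; in particular $\widetilde f(1)=\int_0^\infty f(x)\,dx$, which will supply the leading term.

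Next, Mellin inversion gives $f(y)=\frac{1}{2\pi i}\int_{(\sigma)}\widetilde f(s)\,y^{-s}\,ds$ for $\sigma>0$, valid for $y$ in the sector of holomorphy once the decay of $\widetilde f$ below is in hand. Substituting $y=w(m+a)$, summing over $m\ge0$, and interchanging sum and integral (legitimate when $\sigma>1$, by absolute convergence) yields
\[
\sum_{m\ge0}f(w(m+a))=\frac{1}{2\pi i}\int_{(\sigma)}\widetilde f(s)\,w^{-s}\,\zeta(s,a)\,ds,\qquad \zeta(s,a)=\sum_{m\ge0}(m+a)^{-s},
\]
for $\sigma>1$. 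The idea is then to push the line of integration leftward, across the poles of the integrand, down to $\Re(s)=-N-\tfrac12$, collecting the residues crossed. Since $\zeta(s,a)$ is holomorphic except for a simple pole at $s=1$ with residue $1$, the poles encountered are $s=1$ and $s=-n$ for $0\le n\le N$.

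The residue at $s=1$ contributes $\widetilde f(1)\,w^{-1}=\frac1w\int_0^\infty f(x)\,dx$, the leading term. For $0\le n\le N$ the residue at $s=-n$ equals $\frac{f^{(n)}(0)}{n!}\,w^{n}\,\zeta(-n,a)$, and the classical evaluation $\zeta(-n,a)=-\frac{B_{n+1}(a)}{n+1}$ turns it into $-\frac{B_{n+1}(a)f^{(n)}(0)}{(n+1)!}\,w^{n}$. Summing over $0\le n\le N-1$ reproduces exactly the finite sum in \eqref{asbr}, while the extra term at $n=N$ is itself $O(w^N)$ and may be absorbed into the error. It therefore remains to bound the shifted integral over $\Re(s)=-N-\tfrac12$ by $O_N(w^N)$.

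That remainder estimate is the main obstacle, and the point at which the sector hypothesis is essential. On the line $\Re(s)=-N-\tfrac12$ one has $|w^{-s}|=|w|^{N+1/2}e^{(\arg w)\Im(s)}\le |w|^{N+1/2}e^{\mu|\Im(s)|}$ because $|\arg w|\le\mu$. To tame the factor $e^{\mu|\Im(s)|}$ I would prove that $\widetilde f(s)$ decays like $e^{-\theta|\Im(s)|}$ on vertical lines for some $\theta>\mu$, which follows by rotating the contour in the definition of $\widetilde f$ into the sector $D_\theta$; this rotation is permitted precisely because $f$ is holomorphic and rapidly decaying throughout a sector strictly containing $D_\mu$. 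Since $\zeta(s,a)$ grows only polynomially in $|\Im(s)|$, the integrand is then absolutely integrable and the shifted integral is $O_N(|w|^{N+1/2})$ uniformly for $w\in D_\mu$. Combining the residues with the two $O(w^N)$ contributions gives \eqref{asbr}. Establishing the exponential decay of $\widetilde f$ uniformly across the sector, so that it dominates the growth from $w^{-s}$ and secures uniformity in $w$, is the technical heart of the argument; an alternative route via the Euler--Maclaurin summation formula with the periodized Bernoulli kernel would reach the same expansion but shifts the same difficulty onto a uniform bound for the remainder kernel.
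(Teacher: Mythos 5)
The paper never proves this statement: it is quoted verbatim (with a typo --- the undefined $D_\theta$ should be a sector strictly containing $D_\mu$) from \cite[Theorem 1.2]{za}, whose own proof extends Zagier's Euler--Maclaurin argument to complex sectors. So the comparison must be with \cite{za}, and your Mellin-transform route is genuinely different from that proof. Its bookkeeping is correct: the simple poles of $\widetilde f$ at $s=-n$ with residues $f^{(n)}(0)/n!$, the pole of $\zeta(s,a)$ at $s=1$ with residue $1$, and the evaluation $\zeta(-n,a)=-B_{n+1}(a)/(n+1)$ reproduce every term of \eqref{asbr}, and the stray $n=N$ residue is indeed $O(w^N)$.

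Two substantive gaps sit exactly where you placed the ``technical heart,'' and one of them you have mis-assessed. (i) The bound $\widetilde f(s)\ll e^{-\theta|\Im s|}$ is needed on the line $\Re s=-N-\tfrac12$, where the integral defining $\widetilde f$ diverges, so ``rotating the contour in the definition'' proves nothing there; you must first put the analytic continuation into an integral form that can be rotated, e.g.\ iterate integration by parts to get
\begin{equation*}
\widetilde f(s)=\frac{(-1)^{N+1}}{s(s+1)\cdots(s+N)}\int_0^\infty f^{(N+1)}(x)\,x^{s+N}\,dx ,
\end{equation*}
which converges for $\Re s>-N-1$, and then rotate the ray --- legitimate precisely because \emph{all derivatives} of $f$ are assumed to decay in the larger sector. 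This representation also supplies the factor $(1+|\Im s|)^{-(N+1)}$ needed to beat the polynomial growth of $\zeta(s,a)$ on vertical lines, a point your sketch does not address. (ii) Your method inherently needs the strict gap $\theta>\mu$: on the boundary ray $\arg w=\theta$ the integrand carries $e^{(\arg w-\theta)|\Im s|}$ times polynomial factors and is not integrable. That suffices for the statement as printed here and for every application in this paper, but the Euler--Maclaurin proof in \cite{za} is uniform on the closed sector itself: its remainder is bounded directly by $|w|^{N}\int_0^\infty|f^{(N)}(w(x+a))|\,dx\ll|w|^{N-1}$ along the ray of $w$, using only the assumed decay of $f^{(N)}$ there and the boundedness of the periodized Bernoulli kernel. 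So, contrary to your closing sentence, the exponential tension does \emph{not} reappear in the Euler--Maclaurin route; avoiding it is exactly what that approach buys. Finally, two small repairs: $\zeta(s,a)$ requires $a>0$, so for general real $a$ the finitely many terms with $m+a\le 0$ must be split off; and if ``sufficient decay'' is only power decay $\ll|x|^{-1-\varepsilon}$ (as in \cite{za}), then $\widetilde f$ converges only in the strip $0<\Re s<1+\varepsilon$, so the unfolding step should be done with $1<\sigma<1+\varepsilon$ rather than with arbitrary $\sigma>1$.
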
 
For real numbers $b,\beta$ and $\gamma>0$, we define
\begin{align}\label{af1}
	F_b(\theta):=\sum_{n=0}^{\infty}e^{-(n^2+bn)\theta},
\end{align}
where $\theta=\gamma+\beta i$.
Proceeding as in the proof \cite[Eq. (1.5)]{maora}, 
 we can apply Theorem \ref{thbr} with $\mu=\frac{\pi}{4}$ to obtain the following asymptotic expansion of $F_b(\theta)$.
\begin{proposition}\label{tha}
	For $b>0$ and $\theta=\gamma+\beta i$ satisfying $|\beta|\leq\gamma$, and as $\gamma\rightarrow0^+$, we have
 \begin{align}
	F_b(\theta)=e^{\frac{b^2\theta}{4}}\times\Bigg\{
\frac{	\sqrt{\pi/\theta}}{2}-\sum_{n=0}^{N-1}\frac{(-1)^nB_{2n+1}(b/2)}{(2n+1)n!}\theta^{n}
	+O(\theta^N)\Bigg\}.\label{thg2eq2}
\end{align}
\end{proposition}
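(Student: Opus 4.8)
The plan is to reduce $F_b(\theta)$ to a shifted Gaussian sum and then invoke Theorem \ref{thbr} directly. Completing the square in the exponent of \eqref{af1}, write $n^2+bn=(n+b/2)^2-b^2/4$, so that
$$F_b(\theta)=e^{b^2\theta/4}\sum_{n=0}^\infty e^{-(n+b/2)^2\theta}.$$
The remaining sum has exactly the shape $\sum_{m\geq 0}f(w(m+a))$ appearing in \eqref{asbr}: taking $f(x)=e^{-x^2}$, $a=b/2$, and $w=\sqrt\theta$ (principal branch), one has $f(w(n+a))=e^{-(n+b/2)^2\theta}$. Thus everything reduces to evaluating the right-hand side of \eqref{asbr} for this particular $f$ and then multiplying back by $e^{b^2\theta/4}$.

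Before applying the theorem I would verify its hypotheses. The function $f(x)=e^{-x^2}$ is entire, and for $x\in D_\mu$ with $\mu=\pi/4$ one has $\mathrm{Re}(x^2)=|x|^2\cos(2\arg x)\geq 0$, so $f$ together with all of its derivatives is of sufficient decay on $D_{\pi/4}$. The hypothesis $|\beta|\leq\gamma$ forces $|\arg\theta|\leq\pi/4$, whence the principal square root obeys $|\arg w|\leq\pi/8$; since moreover $|w|=|\theta|^{1/2}\to 0$, we have $w\to 0$ inside $D_\mu$ as $\gamma\to 0^+$, so \eqref{asbr} applies with $\mu=\pi/4$. The leading term of \eqref{asbr} is then
$$\frac{1}{w}\int_0^\infty e^{-x^2}\,dx=\frac{1}{\sqrt\theta}\cdot\frac{\sqrt\pi}{2}=\frac{\sqrt{\pi/\theta}}{2},$$
which, after multiplication by $e^{b^2\theta/4}$, is precisely the main term of \eqref{thg2eq2}.

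For the correction terms I would exploit that $f(x)=e^{-x^2}$ has vanishing odd Taylor coefficients: $f^{(2k+1)}(0)=0$ and $f^{(2k)}(0)=(-1)^k(2k)!/k!$. Here is the one genuinely delicate point. A naive application of \eqref{asbr} with $N$ terms yields only an error $O(w^N)=O(\theta^{N/2})$, whereas \eqref{thg2eq2} asserts the sharper $O(\theta^N)$. To recover this I would apply \eqref{asbr} with $2N$ terms, so the error becomes $O(w^{2N})=O(\theta^N)$; the contributions of odd index $n$ then drop out because $f^{(n)}(0)=0$, and writing the surviving even indices as $n=2k$ gives
$$\sum_{k=0}^{N-1}\frac{B_{2k+1}(b/2)\,f^{(2k)}(0)}{(2k+1)!}\,\theta^{k}=\sum_{k=0}^{N-1}\frac{(-1)^k B_{2k+1}(b/2)}{(2k+1)\,k!}\,\theta^{k},$$
where the factor $(2k+1)!/(2k)!=2k+1$ was used. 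Since both \eqref{asbr} and \eqref{thg2eq2} carry this sum with a minus sign, multiplying through by $e^{b^2\theta/4}$ reproduces \eqref{thg2eq2} with error $O(\theta^N)$, completing the argument. The main obstacle is thus not the analysis but this sharpening of the error term together with the reindexing forced by the vanishing odd derivatives; a secondary technical care is the choice of branch of $\sqrt\theta$ and the verification that the Gaussian decays on the sector $D_{\pi/4}$, which is what licenses the use of Theorem \ref{thbr}.
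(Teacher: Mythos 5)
Your proof is correct and is essentially the paper's own argument: the paper's proof of Proposition \ref{tha} consists of exactly this reduction --- complete the square and invoke Theorem \ref{thbr} (``proceeding as in'' the author's earlier work) --- and the details you supply, namely $f(x)=e^{-x^2}$, $a=b/2$, $w=\sqrt{\theta}$, and the application of \eqref{asbr} with $2N$ terms so that the vanishing odd derivatives $f^{(2k+1)}(0)=0$ both remove the half-integer powers of $\theta$ and upgrade the error from $O(\theta^{N/2})$ to $O(\theta^{N})$, are precisely what that one-line citation suppresses. The only slip is in your verification of the decay hypothesis: $\mathrm{Re}(x^2)\geq 0$ on $D_{\pi/4}$ gives only $|e^{-x^2}|\leq 1$, and on the boundary rays $\arg x=\pm\pi/4$ neither $f$ nor its derivatives decay at all (the derivatives even grow polynomially there), so ``sufficient decay on $D_{\pi/4}$'' is false as stated; since your own observation gives $|\arg\sqrt{\theta}|\leq\pi/8$, the fix is simply to apply the theorem with some $\mu\in[\pi/8,\pi/4)$, on which $\cos(2\arg x)$ is bounded below by a positive constant and the Gaussian decay is genuine --- a harmless imprecision that the paper's own choice of $\mu=\pi/4$ shares.
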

As application, we obtain the asymptotic behavior of $G_{a,c,d}(q)$ near $q=1$. 
\begin{corollary}\label{led1}
	For $y=\frac{1}{2\sqrt{3RN}}$ and $|x|\leq y$, as $N\rightarrow\infty$, we have
\begin{align}
	G_{a,c,d}(q)&=q^{d-\frac{c^2}{4a}}\times\Bigg\{\frac{1}{2}\sqrt{\frac{\pi}{a}}	(-2\pi i\tau)^{-\frac{1}{2}}-\sum_{n=0}^{3}\frac{(-1)^nB_{2n+1}\left(\frac{c}{2a}\right)a^n}{(2n+1)n!}(-2\pi i\tau)^{n}
	+O(N^{-2})\Bigg\}.\label{bskl}
\end{align}
\end{corollary}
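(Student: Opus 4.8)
The plan is to reduce $G_{a,c,d}(q)$ to the function $F_b(\theta)$ of Proposition \ref{tha} by a direct change of variables, and then read off \eqref{bskl} from \eqref{thg2eq2}. Writing $q=e^{2\pi i\tau}$, pulling out the constant factor $q^d$, and comparing the exponents term by term, one finds
$$
G_{a,c,d}(q)=q^{d}\sum_{j\geq0}e^{2\pi i a\tau\left(j^2+\frac{c}{a}j\right)}=q^{d}\,F_{c/a}(\theta),\qquad \theta:=-2\pi i a\tau .
$$
Thus the entire corollary amounts to specializing \eqref{thg2eq2} at $b=c/a$ and $\theta=-2\pi i a\tau$, so no new analysis beyond Proposition \ref{tha} is needed.

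Second, I would verify the hypotheses of Proposition \ref{tha}. Since $a,c>0$ we have $b=c/a>0$. Writing $\tau=x+iy$ gives $\theta=2\pi a y-2\pi i a x$, so its real and imaginary parts are $\gamma=2\pi a y$ and $\beta=-2\pi a x$. Consequently the sector condition $|\beta|\leq\gamma$ is \emph{exactly} the hypothesis $|x|\leq y$ of the corollary, and $\gamma=\pi a/\sqrt{3RN}\to0^+$ as $N\to\infty$. Hence \eqref{thg2eq2} applies uniformly on the strip $|x|\leq y$.

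Third, I would substitute and simplify, taking $N=4$ in \eqref{thg2eq2} (so the sum runs over $n=0,\dots,3$). The outer factor becomes $q^{d}e^{b^2\theta/4}=q^{d-\frac{c^2}{4a}}$, because $b^2\theta/4=-2\pi i\tau\cdot c^2/(4a)$; the leading term becomes $\tfrac12\sqrt{\pi/a}\,(-2\pi i\tau)^{-1/2}$ since $\sqrt{\pi/\theta}=\sqrt{\pi/a}\,(-2\pi i\tau)^{-1/2}$; and each summand acquires $\theta^{n}=a^{n}(-2\pi i\tau)^{n}$ together with $B_{2n+1}(b/2)=B_{2n+1}(c/2a)$, matching \eqref{bskl} term for term.

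Finally, for the error term I would note that $|x|\leq y$ forces $|\tau|=\sqrt{x^2+y^2}\leq\sqrt{2}\,y$, and since $y=1/(2\sqrt{3RN})$ this yields $|\theta|=2\pi a|\tau|=O(N^{-1/2})$; thus the $O(\theta^4)$ in \eqref{thg2eq2} is $O(N^{-2})$, as claimed, the bounded prefactor $|q^{d-c^2/(4a)}|=e^{-2\pi y(d-c^2/(4a))}\to1$ being harmless. The one point that is not purely mechanical is confirming that the error in Theorem \ref{thbr} (and hence in Proposition \ref{tha}) is uniform over the whole sector $|x|\leq y$ as $\gamma\to0^+$, rather than merely pointwise at $x=0$; it is precisely this uniformity that the circle-method integration in Section \ref{zh2} will require when $\tau$ ranges over the major arc.
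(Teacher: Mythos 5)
Your proposal is correct and follows essentially the same route as the paper's proof: both reduce $G_{a,c,d}(q)=q^d F_{c/a}(-2a\pi i\tau)$, check that $|x|\leq y$ gives exactly the sector condition $|\beta|\leq\gamma$ of Proposition \ref{tha}, specialize \eqref{thg2eq2} with $b=c/a$ and $\theta=-2a\pi i\tau$, and use $|\tau|\ll N^{-1/2}$ to turn $O(\theta^4)$ into $O(N^{-2})$. Your closing remark on uniformity is already covered by the statement of Theorem \ref{thbr}, which asserts the expansion holds uniformly as $w\to0$ in the sector $D_\mu$.
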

\begin{proof}
	Let $\theta=-2a\pi i\tau=2a\pi y-2a\pi xi$.
	By assumption, we have $y>0$ and $|x|\leq y$ which imply $2a\pi y>0$ and $|2a\pi x|\leq2a\pi y$.
	Hence, applying \eqref{thg2eq2} with $\theta$ and $b$ replaced by $-2a\pi i\tau$ and $\frac{c}{a}$, respectively, we find that,
	as $|\tau|\rightarrow0^+$,
	\begin{align}\label{app1}   
		G_{a,c,d}(q)&=\sum_{j=0}^{\infty}q^{aj^2+cj+d}=q^d F_{\frac{c}{a}}(-2a\pi i\tau)
	\end{align}
	Since $\tau=x+yi$ with $|x|\leq y$ and $y=\frac{1}{2\sqrt{3RN}}$, we have $|\tau|^2\leq\frac{1}{6RN}$. This together with \eqref{app1} implies
	\eqref{bskl}.
\end{proof}
Next, we consider the asymptotic behavior of the infinite product in $L_{a,c,d}(q)$ near $q=1$.
\begin{lemma}\label{led2}
	For $y=\frac{1}{2\sqrt{3RN}}$ and $|x|\leq y$, as $N\rightarrow\infty$, we have
	\begin{align}
\frac{1}{(q^S,q^{R-S};q^R)_\infty}
	&=	\frac{e^{\pi i\tau\left(\frac{R}{6}-S+\frac{ S^2}{R} \right)}e^{\frac{\pi i}{6R\tau}}}{2\sin\left(\frac{S\pi }{R}\right)}
	\times \left\{1+O\left(e^{\frac{-2\pi  i}{R\tau}}\right)\right\}.\label{812914}
\end{align}
\end{lemma}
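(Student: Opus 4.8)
The plan is to avoid Zagier's expansion (Theorem \ref{thbr}) altogether for this factor: a power-saving estimate of that type cannot produce the \emph{exponentially} small error $O(e^{-2\pi i/(R\tau)})$ demanded by \eqref{812914}. Instead, I would reduce the product to a ratio of a Dedekind eta function and a classical theta function, both of which transform \emph{exactly} under $\tau\mapsto-1/\tau$, so that the error in \eqref{812914} arises naturally from the subleading terms of the transformed expansions. The first step is to restore the missing factor by writing
\begin{align*}
\frac{1}{(q^S,q^{R-S};q^R)_\infty}=\frac{(q^R;q^R)_\infty}{\Theta(\tau)},\qquad \Theta(\tau):=(q^R;q^R)_\infty(q^S,q^{R-S};q^R)_\infty,
\end{align*}
and to identify, via the Jacobi triple product, $\Theta(\tau)=\sum_{n\in\mathbb Z}(-1)^nq^{Rn(n-1)/2+Sn}$. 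The task thus splits into the asymptotics of $(q^R;q^R)_\infty$ and of the theta function $\Theta(\tau)$ as $\tau\to0$ inside the cone $|x|\le y$.

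For the first factor I would use $(q^R;q^R)_\infty=q^{-R/24}\eta(R\tau)$ together with $\eta(-1/w)=\sqrt{-iw}\,\eta(w)$ applied at $w=-1/(R\tau)\to i\infty$; since $\eta(w)=e^{\pi iw/12}\bigl(1+O(e^{2\pi iw})\bigr)$, this yields $(q^R;q^R)_\infty=e^{-\pi iR\tau/12}\sqrt{i/(R\tau)}\,e^{-\pi i/(12R\tau)}\bigl(1+O(e^{-2\pi i/(R\tau)})\bigr)$. For $\Theta(\tau)$ I would complete the square in the exponent, recognize a theta value $\vartheta(z;R\tau)$ with $z=\tau(S-R/2)+\tfrac12$, and apply the Jacobi imaginary transformation $\vartheta(z;\sigma)=(-i\sigma)^{-1/2}e^{-\pi iz^2/\sigma}\vartheta(z/\sigma;-1/\sigma)$. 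The dual series has exponent $-\tfrac{\pi i}{R\tau}\,m(m-1)$, so only the two terms $m=0,1$ (both with $m(m-1)=0$) survive to leading order, combining as $1-e^{2\pi iS/R}=-2ie^{\pi iS/R}\sin(S\pi/R)$, while every other term carries a factor $e^{-2\pi i/(R\tau)}$.

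It then remains to assemble the ratio. The two square-root prefactors cancel exactly, since $\sqrt{i/(R\tau)}\,(-iR\tau)^{1/2}=1$; the $1/\tau$-exponentials combine to $e^{\pi i/(6R\tau)}$ because $-\tfrac1{12}+\tfrac14=\tfrac16$; the $\tau$-linear phases combine to $e^{\pi i\tau(R/6-S+S^2/R)}$ after simplifying $(S-R/2)^2/R-R/12$; and the leftover constants collapse to $1/(2\sin(S\pi/R))$ once the phases $e^{\pm\pi iS/R}$ cancel, giving precisely \eqref{812914}. Both transformed expansions contribute errors of size $O(e^{-2\pi i/(R\tau)})$, which multiply out to the stated error.

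The main obstacle I anticipate is twofold: carefully tracking the several phase factors and the correct branches of the square roots through the two modular transformations, and—more essentially—extracting the clean constant $2\sin(S\pi/R)$ from the completed-square theta transformation while verifying that the tail of the dual series is \emph{uniformly} $O(e^{-2\pi i/(R\tau)})$ over the entire Stolz cone $|x|\le y$. This uniformity (which rests on $\operatorname{Re}(-2\pi i/(R\tau))=-2\pi y/(R|\tau|^2)<0$ throughout the cone) is exactly what the later major-arc estimate in Wright's Circle Method will require.
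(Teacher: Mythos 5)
Your proposal is correct and takes essentially the same route as the paper: the paper likewise restores the missing factor and writes $\frac{1}{(q^S,q^{R-S};q^R)_\infty}=-iq^{\frac{R}{12}-\frac{S}{2}}\frac{\eta(R\tau)}{\theta(S\tau;R\tau)}$, then applies the exact modular transformations $\eta\left(-\frac{1}{\tau}\right)=\sqrt{-i\tau}\,\eta(\tau)$ and $\theta\left(\frac{z}{\tau};-\frac{1}{\tau}\right)=-i\sqrt{-i\tau}\,e^{\frac{\pi iz^2}{\tau}}\theta(z;\tau)$, so that the prefactors cancel and the error is exponentially small, exactly as in your plan. The only cosmetic difference is how the constant is extracted: the paper reads the factor $1-e^{2\pi iS/R}=-2ie^{\pi iS/R}\sin\left(\frac{S\pi}{R}\right)$ off the transformed \emph{product} form of $\theta\left(\frac{S}{R};-\frac{1}{R\tau}\right)$, while you complete the square and read it off the $m=0,1$ terms of the transformed theta \emph{sum}; the two are equivalent by the Jacobi triple product, and your phase and square-root bookkeeping matches the paper's.
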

\begin{proof}
	Rewrite the infinite product on the left side of \eqref{812914} as follows:
	\begin{align}
		\frac{1}{(q^S,q^{R-S};q^R)_\infty}=-iq^{\frac{R}{12}-\frac{S}{2}}\frac{\eta\left(R\tau\right)	}{\theta(S\tau;R\tau)},\label{812fra}
	\end{align}
	where the Jacobi $\theta$-function and Dedekind-$\eta$ function are given by
	\begin{align}
		\eta(\tau):&=q^{\frac{1}{24}}\prod_{k=1}^\infty(1-q^k)\nonumber\\
		\theta(z;\tau):&=-iq^{\frac{1}{8}}\zeta^{-\frac{1}{2}}\prod_{n=1}^\infty(1-q^n)(1-\zeta q^{n-1})(1-\zeta^{-1}q^{n})\nonumber
	\end{align}
	with $\zeta:=e^{2\pi iz}$.
	We also have the following transformation laws (see e.g. \cite{rad}):
	\begin{align}
		\eta\left(-\frac{1}{\tau}\right)&=\sqrt{-i\tau}\eta(\tau),\no\\
		\theta\left(\frac{z}{\tau};-\frac{1}{\tau}\right)&=-i\sqrt{-i\tau}e^{\frac{\pi i z^2}{\tau}}\theta(z;\tau)\no
		.
	\end{align}
	It follows 
	\begin{align}
		\eta(R\tau)&=	\frac{	\eta\left(-\frac{1}{R\tau}\right)}{\sqrt{-iR\tau}},\label{812treta}\\
		\theta(S\tau;R\tau)&=\frac{	\theta\left(\frac{S}{R};-\frac{1}{R\tau}\right)}{-i\sqrt{-iR\tau}e^{\frac{\pi i S^2 \tau}{R}}}\label{812trtha}
		.
	\end{align}
	Substituting \eqref{812treta} and \eqref{812trtha} into \eqref{812fra} yields
	\begin{align}
		&\no	\frac{1}{(q^S,q^{R-S};q^R)_\infty}\\&=-q^{\frac{R}{12}-\frac{S}{2}}e^{\frac{\pi iS^2\tau}{R}}	\frac{	\eta\left(-\frac{1}{R\tau}\right)}{\theta\left(\frac{S}{R};-\frac{1}{R\tau}\right)}\no\\
		&=	\frac{-q^{\frac{R}{12}-\frac{S}{2}}e^{\frac{\pi iS^2\tau}{R} e^{\frac{-\pi i}{12R\tau}}}}{-i e^{\frac{-\pi i}{4R\tau}}e^{\frac{-S\pi i}{R}}}\no
		\\&\quad	\times \prod_{j=0}^\infty \frac{1}{\left(1-e^{\frac{2S\pi i}{R}}e^{\frac{-2\pi j i}{R\tau}}\right)\left(1-e^{\frac{-2S\pi i}{R}}e^{\frac{-2\pi (j+1) i}{R\tau}}\right)}\no
		\\
		&=	\frac{e^{\pi i\tau\left(\frac{R}{6}-S+\frac{ S^2}{R} \right)}e^{\frac{\pi i}{6R\tau}}}{2\sin\left(\frac{S\pi }{R}\right)}\no
		\\&\quad	\times \prod_{j=0}^\infty \frac{1}{\left(1-e^{\frac{2S\pi i}{R}}e^{\frac{-2\pi (j+1) i}{R\tau}}\right)\left(1-e^{\frac{-2S\pi i}{R}}e^{\frac{-2\pi (j+1) i}{R\tau}}\right)}\no
		\\
		&=	\frac{e^{\pi i\tau\left(\frac{R}{6}-S+\frac{ S^2}{R} \right)}e^{\frac{\pi i}{6R\tau}}}{2\sin\left(\frac{S\pi }{R}\right)}.\no
		\times \left\{1+O\left(e^{\frac{-2\pi  i}{R\tau}}\right)\right\}
	\end{align}
	This proves \eqref{812914}.
\end{proof}
Note that, for $y=\frac{1}{2\sqrt{3RN}}$ and $|x|\leq y$, as $N\rightarrow\infty$, we have
\begin{align*}
	e^{\frac{\pi i}{6R\tau}}&=O\left(e^{\pi\sqrt{\frac{N}{3R}}}\right).
\end{align*}
Then the following corollary follows.
\begin{corollary}
	For $y=\frac{1}{2\sqrt{3RN}}$ and $|x|\leq y$, as $N\rightarrow\infty$, we have
	\begin{align}
		L_{a,c,d}(q)&=	\frac{e^{\frac{\pi i}{6R\tau}}}{2\sin\left(\frac{S\pi }{R}\right)}
		\times \Bigg\{\frac{1}{2}\sqrt{\frac{\pi}{a}}	(-2\pi i\tau)^{-\frac{1}{2}}-B_1\left(\frac{c}{2a}\right)\no\\&\qquad\qquad-\frac{	\left(d-\frac{c^2}{4a}+\frac{R}{12}-\frac{S}{2}+\frac{ S^2}{2R} \right)}{2}\sqrt{\frac{\pi}{a}}	(-2\pi i\tau)^{\frac{1}{2}}\no\\&\qquad\qquad -\left[	\left(d-\frac{c^2}{4a}+\frac{R}{12}-\frac{S}{2}+\frac{ S^2}{2R} \right)B_1\left(\frac{c}{2a}\right)+\frac{aB_3\left(\frac{c}{2a}\right)}{3}\right]2\pi i\tau\no\\&\qquad\qquad+\zeta_1^\ast\tau^{3/2}+\zeta_2^\ast\tau^{2}+\zeta_3^\ast\tau^{5/2}\Bigg\}+O\left(e^{\pi\sqrt{\frac{N}{3R}}}N^{-\frac{3}{2}}\right), \label{8131410}
	\end{align}
	where $\zeta_1^\ast,\zeta_2^\ast,\zeta_3^\ast$ are constants depending on $R, S, a, c$ and $d$.
\end{corollary}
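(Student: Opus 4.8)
The plan is to read off \eqref{8131410} by multiplying the two expansions already in hand: the numerator estimate \eqref{bskl} from Corollary \ref{led1} and the denominator estimate \eqref{812914} from Lemma \ref{led2}. Since $L_{a,c,d}(q)=G_{a,c,d}(q)\cdot(q^S,q^{R-S};q^R)_\infty^{-1}$, I would substitute both displays directly. The first step is to merge the two slowly varying exponential prefactors: because $q=e^{2\pi i\tau}$, the factor $q^{d-c^2/(4a)}$ in \eqref{bskl} combines with $e^{\pi i\tau(R/6-S+S^2/R)}$ in \eqref{812914} into the single exponential $e^{2\pi i\tau E}$, where $E:=d-\frac{c^2}{4a}+\frac{R}{12}-\frac{S}{2}+\frac{S^2}{2R}$ is exactly the combination appearing in the stated coefficients. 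The genuinely large factor $e^{\pi i/(6R\tau)}/(2\sin(S\pi/R))$ is then pulled out front, matching \eqref{8131410}.

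Next I would Taylor-expand $e^{2\pi i\tau E}=\sum_{m\ge0}\frac{(2\pi iE)^m}{m!}\tau^m$, which is legitimate uniformly on the whole arc since $|\tau|\le\sqrt{2}\,y=O(N^{-1/2})\to0$. Multiplying this integer-power series against the bracket of \eqref{bskl}, whose terms carry the powers $\tau^{-1/2},\tau^{0},\tau^{1},\tau^{2},\tau^{3}$, and then grouping by total power of $\tau$, reproduces the displayed hierarchy term by term: the $\tau^{-1/2}$ and $\tau^{0}$ coefficients are inherited unchanged (the latter being $-B_1(\frac{c}{2a})$); the $\tau^{1/2}$ coefficient arises solely from $(\tau^{-1/2}\text{-term})\times(2\pi iE\,\tau)$ and collapses to $-\frac{E}{2}\sqrt{\pi/a}(-2\pi i\tau)^{1/2}$ after the elementary identity $2\pi i(-2\pi i)^{-1/2}=-(-2\pi i)^{1/2}$; and the $\tau^{1}$ coefficient gathers $(\tau^{0}\text{-term})\times(2\pi iE\,\tau)$ together with the $n=1$ summand of \eqref{bskl}, giving precisely $-[E\,B_1(\frac{c}{2a})+\frac{a}{3}B_3(\frac{c}{2a})]\,2\pi i\tau$. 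The remaining products, namely those landing at $\tau^{3/2}$, $\tau^{2}$ and $\tau^{5/2}$, are by definition absorbed into the constants $\zeta_1^\ast,\zeta_2^\ast,\zeta_3^\ast$; since these do not influence the leading asymptotics extracted in Theorem \ref{thmain2}, I would leave them unevaluated.

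Finally I would collect the errors. Three sources contribute: the $O(N^{-2})$ remainder inside the bracket of \eqref{bskl}, every product whose total order is $\tau^{3}$ or higher (which is $O(N^{-3/2})$ because $\tau=O(N^{-1/2})$), and the multiplicative correction $\{1+O(e^{-2\pi i/(R\tau)})\}$ from \eqref{812914}. The first two give an error of size $O(N^{-3/2})$ inside the braces; multiplying by the front factor and invoking the displayed estimate $e^{\pi i/(6R\tau)}=O(e^{\pi\sqrt{N/(3R)}})$---valid uniformly on $|x|\le y$ since $|\tau|^2=x^2+y^2$ is minimized at $x=0$---converts them into the asserted $O(e^{\pi\sqrt{N/(3R)}}N^{-3/2})$. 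For the last factor I would note that $\Re\!\left(-\frac{2\pi i}{R\tau}\right)$ is far more negative than $\Re\!\left(\frac{\pi i}{6R\tau}\right)$ is positive, so $e^{\pi i/(6R\tau)}\cdot O(e^{-2\pi i/(R\tau)})$ is exponentially smaller than the error already permitted. The main obstacle is purely this bookkeeping of competing exponential and polynomial sizes; once it is organized, the collection of the finitely many retained coefficients is a routine computation.
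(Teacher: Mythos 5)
Your proposal is correct and follows essentially the same route as the paper: substitute \eqref{bskl} and \eqref{812914} into $L_{a,c,d}(q)=G_{a,c,d}(q)/(q^S,q^{R-S};q^R)_\infty$, merge $q^{d-\frac{c^2}{4a}}$ with $e^{\pi i\tau(\frac{R}{6}-S+\frac{S^2}{R})}$ into $e^{2\pi i\tau E}$, Taylor-expand that factor, and collect powers of $\tau$ with the leftover products absorbed into $\zeta_1^\ast,\zeta_2^\ast,\zeta_3^\ast$ and the error term. Your treatment is in fact slightly more complete than the paper's, since you explicitly verify that the multiplicative correction $\{1+O(e^{-2\pi i/(R\tau)})\}$ from Lemma \ref{led2} is exponentially negligible against $e^{\pi i/(6R\tau)}$, a point the paper's proof passes over in silence.
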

\begin{proof}
Corollary \ref{led1} and Lemma \ref{led2} imply that, for $y=\frac{1}{2\sqrt{3RN}}$ and $|x|\leq y$, as $N\rightarrow\infty$, we have
\begin{align*}
	L_{a,c,d}(q)&=\frac{		G_{a,c,d}(q)}{(q^S,q^{R-S};q^R)_\infty}
	\\&=\frac{e^{2\pi i\tau\left(d-\frac{c^2}{4a}+\frac{R}{12}-\frac{S}{2}+\frac{ S^2}{2R} \right)}e^{\frac{\pi i}{6R\tau}}}{2\sin\left(\frac{S\pi }{R}\right)}
\\&\quad	\times \Bigg\{\frac{1}{2}\sqrt{\frac{\pi}{a}}	(-2\pi i\tau)^{-\frac{1}{2}}-\sum_{n=0}^{3}\frac{(-1)^nB_{2n+1}\left(\frac{c}{2a}\right)a^n}{(2n+1)n!}(-2\pi i\tau)^{n}
	+O(N^{-2})\Bigg\},
\end{align*}	
which together with 
\begin{align}
e^{2\pi i\tau\left(d-\frac{c^2}{4a}+\frac{R}{12}-\frac{S}{2}+\frac{ S^2}{2R} \right)}=\sum_{n=0}^{3}\frac{\left[2\pi i\tau\left(d-\frac{c^2}{4a}+\frac{R}{12}-\frac{S}{2}+\frac{ S^2}{2R} \right)\right]^n}{n!}	+O(N^{-2})\no
\end{align}	
gives \eqref{8131410}.
	\end{proof}
\emph{Remark}: The constants $\zeta_1^\ast,\zeta_2^\ast,\zeta_3^\ast$ can be explicitly expressed by $R, S, a, c, d$ and those terms containing them will be absorbed into the error term when we use 
Wright’s Circle Method. 

\subsection{Bounds away from the dominant pole.} We first obtain the following bound of $G_{a,c,d}(q)$.
\begin{lemma}\label{paw}
  If $y=\frac{1}{2\sqrt{3RN}}$, then, as $N\rightarrow\infty$, we have $|G_{a,c,d}(q)|=O\left(\sqrt{N}\right)$.
\end{lemma}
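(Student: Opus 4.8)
The plan is to bound $|G_{a,c,d}(q)|$ by the crude triangle inequality, since here $x$ ranges over the whole circle of radius determined by $y$, so we cannot invoke Corollary \ref{led1} (which is only valid for $|x|\le y$, i.e.\ near $q=1$). Writing $q=e^{2\pi i\tau}$ with $\tau=x+iy$, each summand satisfies $|q^{aj^2+cj+d}|=e^{-2\pi y(aj^2+cj+d)}$, because the exponents $aj^2+cj+d$ are non-negative integers under the standing hypotheses ($aj^2+cj\in\mathbb{Z}$, $d\in\mathbb{N}_0$, and $a,c>0$, $j\ge0$). Hence
$$|G_{a,c,d}(q)|\le\sum_{j=0}^\infty e^{-2\pi y(aj^2+cj+d)}.$$
Since $a,c>0$, $d\ge0$ and $j\ge0$, every factor $e^{-2\pi y cj}$ and $e^{-2\pi y d}$ is at most $1$, so I would discard them to get the clean Gaussian majorant $|G_{a,c,d}(q)|\le\sum_{j=0}^\infty e^{-2\pi a y j^2}$.

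Next I would estimate this Gaussian sum by comparison with an integral. As $t\mapsto e^{-2\pi a y t^2}$ is decreasing for $t\ge0$, we have $\sum_{j=1}^\infty e^{-2\pi a y j^2}\le\int_0^\infty e^{-2\pi a y t^2}\,dt=\tfrac12\sqrt{\tfrac{1}{2ay}}$, and therefore
$$\sum_{j=0}^\infty e^{-2\pi a y j^2}\le 1+\frac12\sqrt{\frac{1}{2ay}}.$$
Substituting $y=\frac{1}{2\sqrt{3RN}}$ gives $\frac{1}{2ay}=\frac{\sqrt{3RN}}{a}$, so the bound becomes $1+\frac{(3RN)^{1/4}}{2\sqrt{a}}=O\!\left(N^{1/4}\right)$. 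Consequently $|G_{a,c,d}(q)|=O\!\left(N^{1/4}\right)=O\!\left(\sqrt{N}\right)$, which is the assertion of Lemma \ref{paw} (in fact a slightly stronger one).

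I do not expect a genuine obstacle here: the estimate is automatically uniform in $x$ precisely because the triangle inequality throws away all oscillation, and the only analytic input is the elementary Gaussian-sum bound above, whose rigor rests on the monotonicity comparison with $\int_0^\infty e^{-2\pi a y t^2}\,dt$. The mild care needed is only to note that the exponents are genuine non-negative integers, so that $|q^{aj^2+cj+d}|=e^{-2\pi y(aj^2+cj+d)}$ holds termwise. The stated $O(\sqrt{N})$ is deliberately weak; it is all that will be required later, when this bound is multiplied against the estimate for $\frac{1}{(q^A;q^B)_\infty}$ away from $q=1$ to show that the arc of the circle away from the dominant pole contributes negligibly against the main term $e^{2\pi\sqrt{N/(3R)}}$ in Wright's Circle Method.
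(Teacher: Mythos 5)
Your proof is correct, and it reaches the conclusion by a slightly different (and quantitatively stronger) route than the paper. Both arguments open with the same triangle inequality, but the paper finishes by observing that, since $a,c>0$ and the exponents $aj^2+cj+d$ are by hypothesis non-negative integers, they form a strictly increasing sequence of distinct non-negative integers; hence $\sum_{j\geq 0}|q|^{aj^2+cj+d}\leq\sum_{n\geq 0}|q|^{n}=\frac{1}{1-|q|}=\frac{1}{1-e^{-\pi/\sqrt{3RN}}}=O\left(\sqrt{N}\right)$, a one-line geometric-series bound. You instead discard the $cj+d$ part of the exponent and compare the Gaussian majorant $\sum_{j\geq 0}e^{-2\pi ayj^2}$ with $\int_0^\infty e^{-2\pi ayt^2}\,dt$, which yields the sharper estimate $O\left(N^{1/4}\right)$. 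The trade-off: the paper's argument leans on the distinctness and integrality of the exponents to dominate by the full geometric series, whereas yours uses only their quadratic growth (integrality enters merely to make $|q^{aj^2+cj+d}|=|q|^{aj^2+cj+d}$ unambiguous) and captures the true order of $G_{a,c,d}(q)$ on the circle. Since the lemma is only ever used multiplied against the exponentially small bound of Lemma \ref{caw} on the error arc, the extra factor $N^{1/4}$ of precision is not exploited anywhere, so the paper's cruder bound is exactly as serviceable.
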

\begin{proof}
For $q=e^{2\pi i\tau}$, where $\tau=x+\frac{1}{2\sqrt{3RN}}i$, as $N\rightarrow\infty$, we have
\begin{align*}
  |G_{a,c,d}(q)|&=\left|\sum_{j=0}^{\infty}q^{aj^2+cj+d}\right|\\&
 \leq\sum_{j=0}^{\infty}\left|q^{aj^2+cj+d}\right|\\&
 \leq\sum_{n=0}^{\infty}|q^{n}|
 \leq\frac{1}{1-|q|}
 =\frac{1}{1-e^{-\frac{\pi}{\sqrt{3RN}}}}
 =O\left(\sqrt{N}\right).
  \end{align*}
\end{proof}
To estimate the infinite product in $L_{a,c,d}(q)$ when $q$ is away from $1$, we need the following lemma.
\begin{lemma}\label{caw}
	If $y=\frac{1}{2\sqrt{3RN}}$ and $y\leq|x|\leq\frac{1}{2}$, then, as $N\rightarrow\infty$, we have
	\begin{align}
	\left | 	\frac{1}{(q^A;q^B)_\infty} \right |
	\ll \frac{\exp\left(\frac{C}{y}\right)}{(|q|^A;|q|^B)_\infty}, \label{leab}
\end{align} 
	where $A, B$ are relatively prime integers satisfying $B>A\geq 1$ and $C$ is a negative constant depending on $A, B$.
\end{lemma}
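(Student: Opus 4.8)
The plan is to pass to logarithms and compare $|1/(q^A;q^B)_\infty|$ with its value on the positive real axis, $1/(|q|^A;|q|^B)_\infty$. Writing $r=|q|=e^{-2\pi y}$ and using $-\log(1-z)=\sum_{m\ge1}z^m/m$, one has
\[
\log\left|\frac{1}{(q^A;q^B)_\infty}\right|=\operatorname{Re}\sum_{m\ge1}\frac{1}{m}\frac{q^{mA}}{1-q^{mB}},
\qquad
\log\frac{1}{(|q|^A;|q|^B)_\infty}=\sum_{m\ge1}\frac{1}{m}\frac{r^{mA}}{1-r^{mB}}.
\]
Since $|1-q^{mB}|\ge 1-r^{mB}$ and $\operatorname{Re}w\le|w|$, each summand satisfies $\operatorname{Re}\frac{q^{mA}}{1-q^{mB}}\le\frac{r^{mA}}{1-r^{mB}}$, so the difference $D$ of the two logarithms is a sum of nonnegative terms. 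Hence $D$ is bounded below by its $m=1$ term, and it suffices to prove $g(x):=\frac{r^A}{1-r^B}-\operatorname{Re}\frac{q^A}{1-q^B}\gg 1/y$ for a positive constant depending only on $A,B$, uniformly for $y\le|x|\le\frac12$; then \eqref{leab} follows with a negative $C$.

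The next step is the identity (expand both pieces as geometric series in $q^B$ and $r^B$ and take real parts)
\[
g(x)=\sum_{n\ge0}r^{A+Bn}\bigl(1-\cos(2\pi(A+Bn)x)\bigr)=2\sum_{n\ge0}r^{A+Bn}\sin^2\bigl(\pi(A+Bn)x\bigr)\ge0.
\]
Writing $w_n=A+Bn$ and choosing $M\asymp 1/y$ so that $r^{w_n}=e^{-2\pi w_ny}\ge\rho>0$ for all $n\le M$, I get $g(x)\ge 2\rho\sum_{n=0}^{M}\sin^2(\pi w_nx)$, and the problem reduces to showing $\sum_{n=0}^{M}\sin^2(\pi w_nx)\gg M\asymp 1/y$.

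For this I would split according to the distance $\|Bx\|$ from $Bx$ to the nearest integer. When $\|Bx\|\gg 1/M$, the partial geometric-sum estimate $\bigl|\sum_{n=0}^{M}\cos(2\pi w_nx)\bigr|\le 1/|\sin(\pi Bx)|$ gives $\sum_{n=0}^{M}\sin^2(\pi w_nx)\ge\frac{M+1}{2}-\frac{1}{2|\sin(\pi Bx)|}\gg M$. When $\|Bx\|$ is small, $x$ lies near some $j/B$, and the constraint $x\le\frac12$ forces $1\le j<B$; here the coprimality hypothesis $\gcd(A,B)=1$ is decisive, since it guarantees $Aj/B\notin\mathbb{Z}$ and hence $\sin^2(\pi Aj/B)\ge\sin^2(\pi/B)>0$. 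Thus the terms $\sin^2(\pi w_nx)$ cannot all be small: for $n$ in the admissible range they either stay clustered near the nonzero value $\sin^2(\pi Aj/B)$, or, as $n$ grows, the arithmetic progression $\{w_nx\}$ modulo $1$ sweeps out enough of a full period that a positive proportion of terms is bounded away from $0$. In either event $\sum_{n=0}^{M}\sin^2(\pi w_nx)\gg M$.

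The hard part is precisely this near-$j/B$ regime: one must calibrate the truncation length $M$ together with the threshold on $\|Bx\|$ so that the two mechanisms dovetail — for $x$ extremely close to $j/B$ every admissible term is close to $\sin^2(\pi Aj/B)$, whereas for $x$ only moderately close one completes a full period of the progression before $r^{w_n}$ becomes negligible, so that the geometric-sum estimate can be applied over a slightly longer range. Coprimality enters exactly to rule out the degenerate possibility that $\sin(\pi w_nx)$ vanishes for all $n$. Once $g(x)\gg 1/y$ is established uniformly, the chain $\log|1/(q^A;q^B)_\infty|-\log(1/(|q|^A;|q|^B)_\infty)=-D\le-g(x)$ yields \eqref{leab} with a negative constant $C$ depending only on $A,B$.
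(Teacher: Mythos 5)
Your reduction is genuinely different from the paper's argument, and it is sound in outline. The paper never takes real parts: it estimates $\bigl|\log(1/(q^A;q^B)_\infty)\bigr|$ directly, peeling off the $m=1$ term of the series $\sum_m \frac{q^{Am}}{m(1-q^{Bm})}$ (and, near the dangerous points $x\approx j/B$, the $m=1$ and $m=2$ terms combined as $\bigl|\frac{2q^A(1+q^B)+q^{2A}}{2(1-q^{2B})}\bigr|$), getting the $1/y$-saving from $|1-q^B|\ge\sqrt{2}\,(1-|q|^B)(1+o(1))$ when $y\le|x|\le\frac{1}{4B}$, and from the bound $|2q^{B-A}+1|\le 3-\delta$ when $x$ is near some $j/B$ — the coprimality hypothesis entering as $\gcd(B-A,B)=1$, which separates the points $j/B$ from the points $j'/(B-A)$. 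You instead exploit termwise nonnegativity of the difference of logarithms, keep only the $m=1$ term, and reduce everything to the exponential-sum estimate $\sum_{n\le M}\sin^2(\pi(A+Bn)x)\gg M$ with $M\asymp 1/y$, with coprimality entering as $Aj/B\notin\mathbb{Z}$. Your reduction is cleaner and explains structurally why a single term of the log-series suffices; the price is that all the difficulty is concentrated in a uniform equidistribution-type lower bound, which the paper avoids entirely.

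That lower bound is precisely where your write-up stops being a proof: "in either event $\sum_{n=0}^M\sin^2(\pi w_nx)\gg M$" is asserted, with the calibration acknowledged as the hard part but not executed. It can be completed along the lines you sketch, and you should do so explicitly. One workable calibration: run the geometric-sum mechanism over the longer range $M'=\lceil CB/y\rceil$ (still $O(1/y)$, so the weights $r^{w_n}$ remain $\ge\rho'>0$ there); this handles every $x$ with $\|Bx\|\ge A_0/M'$, since then $\sum_{n\le M'}\sin^2(\pi w_nx)\ge \frac{M'+1}{2}-\frac{1}{2\sin(\pi\|Bx\|)}\ge \frac{M'}{4}$ once $A_0\ge1$. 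In the complementary regime $\|Bx\|<A_0/M'$ with nearest integer $j\ne0$, every phase with $n\le M$ satisfies $\|w_nx\|\ge \|Aj/B\|-(A+BM)\frac{\|Bx\|}{B}\ge\frac{1}{B}-\frac{2A_0}{CB}\ge\frac{1}{2B}$ for $C\ge 4A_0$, so each term is $\ge 1/B^2$ and the sum is $\ge (M+1)/B^2$. Second, your stated reason for excluding $j=0$ is wrong: the constraint $|x|\le\frac12$ only gives $|j|\le B/2$, and $x$ as small as $y$ is allowed, where coprimality gives you nothing because $\sin(\pi A\cdot 0/B)=0$. What actually excludes $j=0$ from the dangerous regime is the hypothesis $|x|\ge y$: it forces $\|Bx\|=B|x|\ge By\ge A_0/M'$ (valid since $CB^2\ge A_0$), so any $x$ near $0$ falls into the geometric-sum regime, where no coprimality is needed. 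With these two repairs your argument is complete and gives the lemma with a negative constant $C$ depending only on $A,B$.
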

\begin{proof}
	We have
	\begin{align}
		\log\left(\frac{1}{(q^A;q^B)_\infty} \right)&=-\sum_{n=0}^\infty\log(1-q^{Bn+A})\no\\&=\sum_{n=0}^\infty\sum_{m=1}^{\infty}\frac{q^{Bmn+Am}}{m}\no\\&=\sum_{m=1}^\infty\frac{q^{Am}}{m(1-q^{Bm})}.\label{8121552}
	\end{align}
	
	Case (i):  $y\leq\mid x\mid \leq \frac{1}{4B}$. Note that
	\begin{align}
		\left | 	\log\left(\frac{1}{(q^A;q^B)_\infty} \right)\right |&=\left |\sum_{m=1}^\infty\frac{q^{Am}}{m(1-q^{Bm})}\right|\no\\
		&\leq \left|\frac{q^{A}}{(1-q^{B})}\right|+\left |\sum_{m=2}^\infty\frac{q^{Am}}{m(1-q^{Bm})}\right|\no
		\\
		&\leq\left|\frac{q^{A}}{(1-q^{B})}\right|-\frac{|q|^{A}}{(1-|q|^{B})}+\sum_{m=1}^\infty\frac{|q|^{Am}}{m(1-|q|^{Bm})}\no\\
		&\leq \left(\frac{1}{|(1-q^{B})|}-\frac{1}{(1-|q|^{B})}\right)+	\log\left(\frac{1}{(|q|^A;|q|^B)_\infty} \right).\label{x4b}
	\end{align}
Also, for $y\leq\mid x\mid \leq \frac{1}{4B}$,
	\begin{align*}
		|(1-q^{B})|^2&=(1-e^{2\pi iB (x+iy)})(1-e^{2\pi iB (-x+iy)})\\&=1-2e^{-2\pi By}\cos(2\pi Bx)+e^{-4\pi By}
		\\&\geq 1-2e^{-2\pi By}\cos(2\pi By)+e^{-4\pi By}
		\\&=1-2(1-2\pi By+2\pi^2 B^2y^2+O(y^3))\times(1-2\pi^2B^2y^2+O(y^4))\\&\quad+(1-4\pi By+8\pi^2B^2y^2+O(y^3))
		\\&=8\pi^2B^2y^2+O(y^3)
		\intertext{and}
		1-|q|^{B}&=2\pi By+O(y^2),
	\end{align*}
	as $y\rightarrow0^+.$
	Thus 
	\begin{align}
		\frac{1}{|(1-q^{B})|}-\frac{1}{(1-|q|^{B})}&\leq \frac{1+O(\sqrt{y})}{2\sqrt{2}\pi By}-\frac{1+O(y)}{2\pi By}\no
		\\ &=\frac{1-\sqrt{2}+O(\sqrt{y})}{2\sqrt{2}\pi By}.\label{x4b1}
	\end{align}
	Substituting \eqref{x4b1} into \eqref{x4b} yields
	\begin{align}
		\left | 	\log\left(\frac{1}{(q^A;q^B)_\infty} \right)\right |
		&\leq \frac{1-\sqrt{2}+O(\sqrt{y})}{2\sqrt{2}\pi By}+	\log\left(\frac{1}{(|q|^A;|q|^B)_\infty} \right).\no
	\end{align}
	It follows
	\begin{align}
		\left | 	\frac{1}{(q^A;q^B)_\infty} \right |
		&\leq 	\frac{\exp\left(\frac{-(\sqrt{2}-1)+O(\sqrt{y})}{2\sqrt{2}\pi By}\right)}{(|q|^A;|q|^B)_\infty} 
		\ll \frac{\exp\left(\frac{-(\sqrt{2}-1)}{4\pi By}\right)}{(|q|^A;|q|^B)_\infty}, \label{8121549}
	\end{align} 
when $y\leq\mid x\mid \leq \frac{1}{4B}$ and $y\rightarrow0^+.$
	
	Case (ii):  $\frac{1}{4B}\leq\mid x\mid \leq \frac{1}{2}$.
	
	Let 
	\begin{align*}
		P:&=\left\{x\,\Bigg|\, \Bigg.\frac{1}{4B}\leq\mid x\mid \leq \frac{1}{2}\right\}\\
		P_1:&=\left\{\frac{j}{B}\,\Bigg|\, \Bigg. j\in \mathbb{Z}\text{ and }1\leq |j|\leq \left\lceil\frac{B}{2}\right\rceil\right\}\\
		P_2:&=\left\{\frac{j}{B-A}\,\Bigg| \,\Bigg. j\in \mathbb{Z}\text{ and }1\leq |j|\leq \left\lceil\frac{B-A}{2}\right\rceil\right\}
	\end{align*}
	By assumption, we have $(B-A,B)=1$. Then $P_1\cap P_2=\emptyset$.
	Thus
	$$M^\prime:=\min\left\{|a-b|\Big| a\in P_1, b\in P_2\right\}>0.$$
	Let $M:=\min\left\{\frac{M^\prime}{4}, \frac{1}{8B}\right\}>0$ and define
	\begin{align*}
		P_3:&=\bigcup_{j\in\mathbb{Z}}  \left(\frac{j}{B}-M,\frac{j}{B}+M\right)\bigcap P\\
		P_4:&=\bigcup_{j\in\mathbb{Z}}  \left(\frac{j}{B-A}-M,\frac{j}{B-A}+M\right)\bigcap P.
	\end{align*}
	Then
	$$ P_3\cap P_4=\emptyset.$$
	When $x\in P\setminus P_3$, we have $\cos(2\pi Bx)\leq \cos (2\pi BM)<1$ and
	\begin{align*}
		|(1-q^{B})|^2&=1-2e^{-2\pi By}\cos(2\pi Bx)+e^{-4\pi By}
		\\&\geq 1-2e^{-2\pi By}\cos(2\pi BM)+e^{-4\pi By}
		\\&=2-2\cos(2\pi BM)+O(y)
		\\&\geq 8\pi^2B^2y^2+O(y^3),
	\end{align*}
	as $y\rightarrow0^+.$ Thus \eqref{8121549} remains true in this case.
	Then
	\begin{align}
		\left | 	\frac{1}{(q^A;q^B)_\infty} \right |
		&\leq 	\frac{\exp\left(\frac{-(\sqrt{2}-1)+O(\sqrt{y})}{2\sqrt{2}\pi By}\right)}{(|q|^A;|q|^B)_\infty} 
		\ll \frac{\exp\left(\frac{-(\sqrt{2}-1)}{4\pi By}\right)}{(|q|^A;|q|^B)_\infty}, \label{8132038}
	\end{align} 
	when $x\in P\setminus P_3$ and $y\rightarrow0^+.$ 
	
	Next, we consider $x\in P_3\subset P\setminus P_4$. 
	By \eqref{8121552}, we have
	\begin{align}
		\left | 	\log\left(\frac{1}{(q^A;q^B)_\infty} \right)\right |&=\left |\sum_{m=1}^\infty\frac{q^{Am}}{m(1-q^{Bm})}\right|\no\\
		&\leq \left|\frac{q^{A}}{(1-q^{B})}+\frac{q^{2A}}{2(1-q^{2B})}\right|+\left |\sum_{m=3}^\infty\frac{q^{Am}}{m(1-q^{Bm})}\right|\no
		\\
		&\leq \left|\frac{2q^{A}(1+q^{B})+q^{2A}}{2(1-q^{2B})}\right|-\frac{|q|^{A}}{(1-|q|^{B})}-\frac{|q|^{2A}}{2(1-|q|^{2B})}\no\\&\quad+\sum_{m=1}^\infty\frac{|q|^{Am}}{m(1-|q|^{Bm})}\no.
	\end{align}
	Note that
	\begin{align*}
		\left|2q^{A}(1+q^{B})+q^{2A}\right|&\leq  \left|2(1+q^{B})+q^{A}\right|\\
		&\leq 2+ \left|2q^{B}+q^{A}\right|
		\\
		&\leq 2+ \left|2q^{B-A}+1\right|.
	\end{align*}
	Then
	\begin{align}
		\left | 	\log\left(\frac{1}{(q^A;q^B)_\infty} \right)\right |
		&\leq \frac{ 2+ \left|2q^{B-A}+1\right|}{2(1-|q|^{2B})}-\frac{2|q|^{A}(1+|q|^{B})+|q|^{2A}}{2(1-|q|^{2B})}\no\\&\quad+\sum_{m=1}^\infty\frac{|q|^{Am}}{m(1-|q|^{Bm})}\no\\
		&= \frac{  \left|2q^{B-A}+1\right|-3+O(y)}{2(1-|q|^{2B})}+\log\left(\frac{1}{(|q|^A;|q|^B)_\infty} \right).\label{81310262}
	\end{align}
	Moreover, when $x\in  P\setminus P_4$, we have $\cos(2\pi (B-A)M)<1$ and
	\begin{align*}
		\left|2q^{B-A}+1\right|^2&=(1+2e^{2\pi i(B-A) (x+iy)})(1+2e^{2\pi i(B-A) (-x+iy)})\\&=1+4e^{-2\pi (B-A)y}\cos(2\pi (B-A)x)+4e^{-4\pi (B-A)y}
		\\&\leq1+4e^{-2\pi (B-A)y}\cos(2\pi (B-A)M)+4e^{-4\pi (B-A)y}
		\\&=5+4\cos(2\pi (B-A)M)+O(y)
	\end{align*}
	as $y\rightarrow0^+.$ This gives
	\begin{align}
		\left|2q^{B-A}+1\right|-3&=\frac{\left|2q^{B-A}+1\right|^2-9}{\left|2q^{B-A}+1\right|+3}\no\\&\leq\frac{ 4\left(\cos(2\pi (B-A)M)-1\right)+O(y)}{\left|2q^{B-A}+1\right|+3}
		\no	\\&\leq\frac{ 2\left(\cos(2\pi (B-A)M)-1\right)+O(y)}{3}.\label{81310261}
	\end{align}
	Substituting \eqref{81310261} in \eqref{81310262} yields 
	\begin{align}
		\left | 	\log\left(\frac{1}{(q^A;q^B)_\infty} \right)\right |
		&\leq \frac{  \left(\cos(2\pi (B-A)M)-1\right)+O(y)}{3(1-|q|^{2B})}+\log\left(\frac{1}{(|q|^A;|q|^B)_\infty} \right)\no\\
		&\leq \frac{  \left(\cos(2\pi (B-A)M)-1\right)+O(y)}{12\pi By}+\log\left(\frac{1}{(|q|^A;|q|^B)_\infty} \right).\no
	\end{align}
Thus
\begin{align}
	\left | 	\frac{1}{(q^A;q^B)_\infty} \right |
	&\leq 	\frac{\exp\left( \frac{  \left(\cos(2\pi (B-A)M)-1\right)+O(y)}{12\pi By}\right)}{(|q|^A;|q|^B)_\infty} 
	\ll 	\frac{\exp\left( \frac{  \left(\cos(2\pi (B-A)M)-1\right)}{24\pi By}\right)}{(|q|^A;|q|^B)_\infty}, \no
\end{align} 
	when $x\in P_3$ and $y\rightarrow0^+.$ This together with \eqref{8121549} and \eqref{8132038} proves 
	\eqref{leab}.
\end{proof}

Using Lemmas \ref{paw} and \ref{caw}, we get a bound for $L_{a,c,d}(q)$ in the region away from $1$. It is exponentially smaller than
the asymptotic discussed in Section \ref{ab}.
\begin{corollary}
If $y=\frac{1}{2\sqrt{3RN}}$ and $y\leq|x|\leq\frac{1}{2}$, then, as $N\rightarrow\infty$, we have
  \begin{align}
    |L_{a,c,d}(q)|=O\left(\sqrt{N}e^{\pi\sqrt{\frac{N}{3R}}+2C\sqrt{3RN}}\right),\label{8132150}
  \end{align}
with some negative constant $C$ depending only on $R, S$.
\end{corollary}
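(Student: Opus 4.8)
The plan is to bound $L_{a,c,d}(q)$ by splitting it into three factors and applying the three preceding results in turn. I would write
\[
L_{a,c,d}(q)=G_{a,c,d}(q)\cdot\frac{1}{(q^{S};q^{R})_\infty}\cdot\frac{1}{(q^{R-S};q^{R})_\infty},
\]
so that the estimate reduces to controlling each factor separately on the minor arc $y\le|x|\le\frac12$. First I would apply Lemma~\ref{caw} to the two reciprocal products, once with $(A,B)=(S,R)$ and once with $(A,B)=(R-S,R)$. The hypotheses hold because $(R,S)=1$ forces both $(S,R)=1$ and $(R-S,R)=1$, while $1\le S<R$ gives $R>S\ge1$ and $R>R-S\ge1$. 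This yields negative constants $C_1,C_2$, each depending only on $R$ and $S$, with
\[
\left|\frac{1}{(q^{S},q^{R-S};q^{R})_\infty}\right|
\ll\frac{\exp\!\left(\tfrac{C_1+C_2}{y}\right)}{(|q|^{S},|q|^{R-S};|q|^{R})_\infty}.
\]
Setting $C:=C_1+C_2<0$ and recalling $\tfrac1y=2\sqrt{3RN}$ turns the numerator into $\exp\!\left(2C\sqrt{3RN}\right)$.

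Next I would bound the remaining real product $(|q|^{S},|q|^{R-S};|q|^{R})_\infty^{-1}$ from above. The key observation is that $|q|=e^{-2\pi y}$ is precisely the value $e^{2\pi i\tau}$ at the real argument $\tau=iy$, which satisfies $|x|=0\le y$ and hence lies in the range of validity of Lemma~\ref{led2}. Evaluating \eqref{812914} at $\tau=iy$, the factor $e^{\pi i\tau(R/6-S+S^2/R)}=e^{-\pi y(R/6-S+S^2/R)}$ is $O(1)$ as $y\to0^+$, while the dominant factor is $e^{\pi i/(6R\cdot iy)}=e^{\pi/(6Ry)}$. Since $\tfrac{\pi}{6Ry}=\pi\sqrt{N/(3R)}$ for $y=\tfrac{1}{2\sqrt{3RN}}$, this gives
\[
\frac{1}{(|q|^{S},|q|^{R-S};|q|^{R})_\infty}=O\!\left(e^{\pi\sqrt{N/(3R)}}\right).
\]

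Finally I would assemble the pieces: combining the two displays with the bound $|G_{a,c,d}(q)|=O(\sqrt N)$ from Lemma~\ref{paw} produces
\[
|L_{a,c,d}(q)|\ll\sqrt N\,e^{\pi\sqrt{N/(3R)}}\,e^{2C\sqrt{3RN}},
\]
which is exactly \eqref{8132150} with $C$ negative and depending only on $R,S$. I expect the only genuine content to be the verification that Lemma~\ref{caw} applies to both reciprocal products and the correct placement of $\tau=iy$ inside the admissible region of Lemma~\ref{led2}; everything else is bookkeeping. The conceptual payoff, rather than any computational difficulty, is that $C<0$ makes this minor-arc estimate exponentially smaller than the major-arc growth $e^{2\pi\sqrt{N/(3R)}}$ that emerges after coefficient extraction (note $|q|^{-N}=e^{\pi\sqrt{N/(3R)}}$ at the saddle $y=\tfrac{1}{2\sqrt{3RN}}$), which is what will justify discarding the minor arc in Wright's Circle Method.
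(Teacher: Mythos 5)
Your proposal is correct and follows essentially the same route as the paper: bound $G_{a,c,d}(q)$ by Lemma~\ref{paw}, bound the reciprocal products by Lemma~\ref{caw}, and bound the resulting real product $(|q|^S,|q|^{R-S};|q|^R)_\infty^{-1}$ by evaluating \eqref{812914} on the positive imaginary axis. The only difference is that you make explicit what the paper leaves implicit --- applying Lemma~\ref{caw} separately to $(q^S;q^R)_\infty$ and $(q^{R-S};q^R)_\infty$ with $C=C_1+C_2$, and checking that $\tau=iy$ lies in the admissible region of Lemma~\ref{led2} --- which is a faithful (indeed slightly more careful) rendering of the same argument.
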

\begin{proof}
Lemmas \ref{paw} and \ref{caw} imply that, as $N\rightarrow\infty$, we have
  \begin{align}
 |L_{a,c,d}(q)|=   \left|\frac{G_{a,c,d}(q)}{(q^S,q^{R-S};q^R)_\infty}\right|\ll  \frac{\sqrt{N}\exp\left(\frac{C}{y}\right)}{(|q|^S,|q|^{R-S};|q|^R)_\infty}\label{8132145}
  \end{align}
with some negative constant $C$ depending on $R, S$.
 Applying \eqref{812914}, we find that
   \begin{align}
 \frac{1}{(|q|^S,|q|^{R-S};|q|^R)_\infty}=O\left(e^{\pi\sqrt{\frac{N}{3R}}}\right),\no
 \end{align}
which together with \eqref{8132145} gives \eqref{8132150}.
\end{proof}
\bigskip
\section{Wright’s CIRCLE METHOD}\label{zh2}
In this section, we apply Wright’s Circle Method
to complete the proof of Theorem \ref{thmain2}.

We give the detailed proof of \eqref{thmain2eq}.
By Cauchy's residue theorem, we have 
\begin{align}
\mathcal{B}_{a,c,d}(N)&=\frac{1}{2\pi i}\int_{\mathcal{C}}\frac{L_{a,c,d}(q)}{q^{N+1}}dq\no\\&=\int_{-\frac{1}{2}}^{\frac{1}{2}}
  L_{a,c,d}\left(e^{-\frac{\pi}{\sqrt{3RN}}+2\pi ix}\right)e^{\pi\sqrt{\frac{N}{3R}}-2\pi iNx}dx,\label{cc}
\end{align}
where the contour is the counterclockwise traversal of the circle $\mathcal{C}:=\left\{|q|=e^{-\frac{\pi}{\sqrt{3RN}}}\right\}$.
Separate the integral in \eqref{cc} into two ranges and write
\begin{align} \mathcal{B}_{a,c,d}(N)=I'+I''\label{8141117}
	\end{align}
 with
\begin{align*}
I':&=\int_{|x|\leq\frac{1}{2\sqrt{3RN}}}
  L_{a,c,d}\left(e^{-\frac{\pi}{\sqrt{3RN}}+2\pi ix}\right)e^{\pi\sqrt{\frac{N}{3R}}-2\pi iNx}dx\\
  \intertext{and}\\
 I'':&=\int_{|x|\leq\frac{1}{2\sqrt{3RN}}\leq\frac{1}{2}}
 L_{a,c,d}\left(e^{-\frac{\pi}{\sqrt{3RN}}+2\pi ix}\right)e^{\pi\sqrt{\frac{N}{3R}}-2\pi iNx}dx .
\end{align*}
We prove that $I'$ contributes the main terms in the asymptotic expansion of $\mathcal{B}_{a,c,d}(N)$ 
and the integral $I''$ is absorbed into the error term.
\subsection{Main arc.}
For a real number $s$, define
\begin{align*}
  P_s:=\frac{1}{2\pi i}\int_{1-i}^{1+i}v^se^{\pi\sqrt{\frac{N}{3R}}(v+\frac{1}{v})}dv.
\end{align*}
By \cite[Lemma 4.2]{kk}, as $N\rightarrow\infty$,
we have
\begin{align}
  P_s-I_{-s-1}\left(2\pi\sqrt{\frac{N}{3R}}\right)=O\left(e^{\frac{3\pi}{2}\sqrt{\frac{N}{3R}}}\right).\label{8141100}
\end{align}
Applying \eqref{8141100}, we evaluate $I'$ by the modified Bessel functions up to an allowable error.
\begin{proposition}\label{p1}
As $N\rightarrow\infty$, we have
\begin{align}
I'
&=\frac{\sqrt{\frac{\pi}{a}}}{4\sin\left(\frac{S\pi }{R}\right)}
\times\left(\frac{\pi}{\sqrt{3RN}}\right)^{1/2}
I_{-\frac{1}{2}}\left(2\pi\sqrt{\frac{N}{3R}}\right)\nonumber\\
&\quad-\frac{B_1\left(\frac{c}{2a}\right)}{2\sin\left(\frac{S\pi }{R}\right)}\left(\frac{\pi}{\sqrt{3RN}}\right)
I_{-1}\left(2\pi\sqrt{\frac{N}{3R}}\right)\nonumber\\
&\quad -\frac{\sqrt{\frac{\pi}{a}}\left(d-\frac{c^2}{4a}+\frac{R}{12}-\frac{S}{2}+\frac{ S^2}{2R} \right)}{4\sin\left(\frac{S\pi }{R}\right)}
\left(\frac{\pi}{\sqrt{3RN}}\right)^{3/2}
I_{-\frac{3}{2}}\left(2\pi\sqrt{\frac{N}{3R}}\right)\nonumber\\
&\quad +\frac{\left[	\left(d-\frac{c^2}{4a}+\frac{R}{12}-\frac{S}{2}+\frac{ S^2}{2R} \right)B_1\left(\frac{c}{2a}\right)+\frac{aB_3\left(\frac{c}{2a}\right)}{3}\right]}{2\sin\left(\frac{S\pi }{R}\right)}
\left(\frac{\pi}{\sqrt{3RN}}\right)^{2}
I_{-2}\left(2\pi\sqrt{\frac{N}{3R}}\right)
\nonumber\\
&\quad
+O\left(N^{-\frac{3}{2}}e^{2\pi\sqrt{\frac{N}{3R}}}\right)
	.\label{8141059}
\end{align}
\end{proposition}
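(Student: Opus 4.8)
The plan is to substitute the near-pole expansion \eqref{8131410} of $L_{a,c,d}(q)$ directly into the main-arc integral $I'$ and to recognize each resulting term as one of the integrals $P_s$. Throughout, $q=e^{-\pi/\sqrt{3RN}+2\pi ix}=e^{2\pi i\tau}$ with $\tau=x+iy$ and $y=\frac{1}{2\sqrt{3RN}}$, so the range $|x|\le y$ defining $I'$ corresponds to the horizontal segment $\tau\in[-y+iy,\,y+iy]$. The first observation I would record is that the two exponential factors in the integrand combine cleanly: since $2\pi Ny=\pi\sqrt{N/(3R)}$, one has $e^{\pi\sqrt{N/(3R)}-2\pi iNx}=e^{-2\pi iN\tau}$, so that after inserting \eqref{8131410} the integrand becomes $\frac{1}{2\sin(S\pi/R)}$ times a finite sum of terms of the shape $(-2\pi i\tau)^\alpha e^{\frac{\pi i}{6R\tau}-2\pi iN\tau}$, with $\alpha\in\{-\tfrac12,0,\tfrac12,1\}$ carrying the explicit coefficients of \eqref{8131410} and $\alpha\in\{\tfrac32,2,\tfrac52\}$ carrying the constants $\zeta_1^\ast,\zeta_2^\ast,\zeta_3^\ast$, plus the remainder $O(e^{\pi\sqrt{N/(3R)}}N^{-3/2})$.

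The key step is the change of variables $\tau=iyv$, equivalently $v=1-ix/y$. This maps $|x|\le y$ onto the vertical segment from $1+i$ to $1-i$ (the contour of $P_s$, traversed in the opposite sense), sends $(-2\pi i\tau)^\alpha$ to $(2\pi y)^\alpha v^\alpha$, and---this is the point of the normalization $y=\frac{1}{2\sqrt{3RN}}$---converts the exponent into $\pi\sqrt{N/(3R)}\,(v+1/v)$, because the coefficients of $v$ and of $1/v$ both equal $\pi\sqrt{N/(3R)}$ for exactly this value of $y$. Collecting $dx=iy\,dv$ together with the orientation reversal, each term reduces to
\begin{align*}
\frac{\mathrm{coeff}_\alpha}{2\sin(S\pi/R)}\,(2\pi y)^{\alpha+1}P_\alpha,\qquad 2\pi y=\frac{\pi}{\sqrt{3RN}}.
\end{align*}
I would then invoke \eqref{8141100} to replace $P_\alpha$ by $I_{-\alpha-1}\!\left(2\pi\sqrt{N/(3R)}\right)$; matching the four values $\alpha=-\tfrac12,0,\tfrac12,1$ against the coefficients in \eqref{8131410} reproduces exactly the four displayed main terms of \eqref{8141059}.

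What remains is the error bookkeeping, which I expect to be the main source of care rather than difficulty. There are three contributions to control. First, the remainder $O(e^{\pi\sqrt{N/(3R)}}N^{-3/2})$ from \eqref{8131410}, integrated over a segment of length $2y=O(N^{-1/2})$ against a factor of modulus $e^{\pi\sqrt{N/(3R)}}$, produces $O(N^{-2}e^{2\pi\sqrt{N/(3R)}})$. Second, the error in \eqref{8141100}, namely $O(e^{\frac{3\pi}{2}\sqrt{N/(3R)}})$ for each $P_\alpha$, is multiplied by $(2\pi y)^{\alpha+1}=O(N^{-(\alpha+1)/2})$ and is exponentially smaller than $e^{2\pi\sqrt{N/(3R)}}$, hence negligible. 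Third---and this is the term that actually fixes the size of the error in \eqref{8141059}---the $\zeta_1^\ast\tau^{3/2}$ term yields $(\pi/\sqrt{3RN})^{5/2}I_{-5/2}(2\pi\sqrt{N/(3R)})$, which by the Bessel asymptotic \eqref{besl} is $O(N^{-3/2}e^{2\pi\sqrt{N/(3R)}})$, while the $\zeta_2^\ast\tau^2$ and $\zeta_3^\ast\tau^{5/2}$ terms are of still smaller order. Summing the four main terms and absorbing all three error sources into $O(N^{-3/2}e^{2\pi\sqrt{N/(3R)}})$ gives \eqref{8141059}. The only genuine subtlety is confirming that the change of variables maps the main arc precisely onto the $P_s$ segment with no truncation, so that no spurious tail integral appears; this is exactly what the relation $v=1-ix/y$ guarantees.
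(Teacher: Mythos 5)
Your proposal is correct and follows essentially the same route as the paper: substituting \eqref{8131410} into $I'$, mapping the main arc onto the $P_s$ contour (your single substitution $\tau=iyv$ is exactly the composition of the paper's two steps $x=\frac{u}{2\sqrt{3RN}}$ and $u=i(v-1)$), invoking \eqref{8141100}, and absorbing the $\zeta_1^\ast,\zeta_2^\ast,\zeta_3^\ast$ terms into the error via the Bessel asymptotic \eqref{besl}/\eqref{eqi}. Your coefficient matching and error bookkeeping agree with the paper's (indeed your treatment of the integrated remainder from \eqref{8131410} is slightly more careful), so there is nothing to correct.
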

\begin{proof}
Writing $\tau=\frac{1}{2\sqrt{3RN}}(u+i)$, i.e., replacing $x$ by $\frac{u}{2\sqrt{3RN}}$ yields
\begin{align}
	I'&=\int_{|x|\leq\frac{1}{2\sqrt{3RN}}}
	L_{a,c,d}\left(e^{-\frac{\pi}{\sqrt{3RN}}+2\pi ix}\right)e^{\pi\sqrt{\frac{N}{3R}}-2\pi iNx}dx\nonumber\\
	&=\frac{1}{2\sqrt{3RN}}\int_{-1}^{1}
	L_{a,c,d}\left(e^{\frac{\pi}{\sqrt{3RN}}(-1+iu)}\right)e^{\pi\sqrt{\frac{N}{3R}}(1-iu)}du. \label{ci11}
\end{align}
Applying \eqref{8131410} with 
 $\tau$ replaced by $\frac{1}{2\sqrt{3RN}}(u+i)$ and
noting
that $-2\pi i\tau=\frac{\pi(1-iu)}{\sqrt{3RN}}$ and $e^{\frac{\pi i}{6R\tau}}=e^{\pi\sqrt{\frac{N}{3R}}\left(\frac{1}{1-iu}\right)}$, we have
\begin{align}
 & L_{a,c,d}\left(e^{\frac{\pi}{\sqrt{3RN}}(-1+iu)}\right)\no\\&=\no\frac{\sqrt{\frac{\pi}{a}}e^{\pi\sqrt{\frac{N}{3R}}\left(\frac{1}{1-iu}\right)}}{4\sin\left(\frac{S\pi }{R}\right)}\left(\frac{\pi(1-iu)}
  {\sqrt{3RN}}\right)^{-\frac{1}{2}}-	\frac{B_1\left(\frac{c}{2a}\right)e^{\pi\sqrt{\frac{N}{3R}}\left(\frac{1}{1-iu}\right)}}{2\sin\left(\frac{S\pi }{R}\right)}\\&\quad-\frac{\sqrt{\frac{\pi}{a}}\left(d-\frac{c^2}{4a}+\frac{R}{12}-\frac{S}{2}+\frac{ S^2}{2R} \right)e^{\pi\sqrt{\frac{N}{3R}}\left(\frac{1}{1-iu}\right)}}{4\sin\left(\frac{S\pi }{R}\right)}\left(\frac{\pi(1-iu)}
  {\sqrt{3RN}}\right)^{\frac{1}{2}}
    \nonumber\\ &\quad +\frac{\left[	\left(d-\frac{c^2}{4a}+\frac{R}{12}-\frac{S}{2}+\frac{ S^2}{2R} \right)B_1\left(\frac{c}{2a}\right)+\frac{aB_3\left(\frac{c}{2a}\right)}{3}\right]e^{\pi\sqrt{\frac{N}{3R}}\left(\frac{1}{1-iu}\right)}}{2\sin\left(\frac{S\pi }{R}\right)}\left(\frac{\pi(1-iu)}
    {\sqrt{3RN}}\right)
  \nonumber\\ &\quad 
 +\zeta_1^\ast\left(\frac{\pi(1-iu)}
  {\sqrt{3RN}}\right)^{3/2} e^{\pi\sqrt{\frac{N}{3R}}\left(\frac{1}{1-iu}\right)} +\zeta_2^\ast\left(\frac{\pi(1-iu)}
  {\sqrt{3RN}}\right)^{2} e^{\pi\sqrt{\frac{N}{3R}}\left(\frac{1}{1-iu}\right)}\no\\&\quad+\zeta_3^\ast\left(\frac{\pi(1-iu)}
  {\sqrt{3RN}}\right)^{5/2} e^{\pi\sqrt{\frac{N}{3R}}\left(\frac{1}{1-iu}\right)}+O\left(e^{\pi\sqrt{\frac{N}{3R}}}N^{-\frac{3}{2}}\right)\quad(\textrm{as } N\rightarrow\infty). \label{cfkl}
\end{align}
	where $\zeta_1^\ast,\zeta_2^\ast,\zeta_3^\ast$ are constants depending on $R, S, a, c,$ and $d$.
 Substituting \eqref{cfkl} into \eqref{ci11}
gives
\begin{align*}
I'
&=\frac{\sqrt{\frac{\pi}{a}}}{8\sqrt{3RN}\sin\left(\frac{S\pi }{R}\right)}\int_{-1}^{1}
e^{\pi\sqrt{\frac{N}{3R}}\left(\frac{1}{1-iu}+(1-iu)\right)}\left(\frac{\pi(1-iu)}
{\sqrt{3RN}}\right)^{-\frac{1}{2}}du\\
  &\quad-\frac{B_1\left(\frac{c}{2a}\right)}{4\sqrt{3RN}\sin\left(\frac{S\pi }{R}\right)}\int_{-1}^{1}
  e^{\pi\sqrt{\frac{N}{3R}}\left(\frac{1}{1-iu}+(1-iu)\right)}du\\&\quad
  -\frac{\sqrt{\frac{\pi}{a}}\left(d-\frac{c^2}{4a}+\frac{R}{12}-\frac{S}{2}+\frac{ S^2}{2R} \right)}{8\sqrt{3RN}\sin\left(\frac{S\pi }{R}\right)}\int_{-1}^{1}
  e^{\pi\sqrt{\frac{N}{3R}}\left(\frac{1}{1-iu}+(1-iu)\right)}\left(\frac{\pi(1-iu)}
  {\sqrt{3RN}}\right)^{\frac{1}{2}}du\\
   &\quad+\frac{\left[	\left(d-\frac{c^2}{4a}+\frac{R}{12}-\frac{S}{2}+\frac{ S^2}{2R} \right)B_1\left(\frac{c}{2a}\right)+\frac{aB_3\left(\frac{c}{2a}\right)}{3}\right]}{4\sqrt{3RN}\sin\left(\frac{S\pi }{R}\right)}
   \\&\quad\times\int_{-1}^{1}
  e^{\pi\sqrt{\frac{N}{3R}}\left(\frac{1}{1-iu}+(1-iu)\right)}\left(\frac{\pi(1-iu)}{\sqrt{3RN}}\right)du\\
  &\quad+\frac{\zeta_1^\ast}{2\sqrt{3RN}}\int_{-1}^{1}
  e^{\pi\sqrt{\frac{N}{3R}}\left(\frac{1}{1-iu}+(1-iu)\right)}\left(\frac{\pi(1-iu)}{\sqrt{3RN}}\right)^{3/2}du
  \\
  &\quad+\frac{\zeta_2^\ast}{2\sqrt{3RN}}\int_{-1}^{1}
  e^{\pi\sqrt{\frac{N}{3R}}\left(\frac{1}{1-iu}+(1-iu)\right)}\left(\frac{\pi(1-iu)}{\sqrt{3RN}}\right)^{2}du
  \\
  &\quad+\frac{\zeta_3^\ast}{2\sqrt{3RN}}\int_{-1}^{1}
  e^{\pi\sqrt{\frac{N}{3R}}\left(\frac{1}{1-iu}+(1-iu)\right)}\left(\frac{\pi(1-iu)}{\sqrt{3RN}}\right)^{5/2}du
 \\&\quad +\int_{-1}^{1}O\left(N^{-\frac{3}{2}}e^{\pi\sqrt{\frac{N}{3R}}}\right)
  e^{\pi\sqrt{\frac{N}{3R}}(1-iu)}du.
\end{align*}
as $N\rightarrow\infty$.
Making the change of variables $u=i(v-1)$, we find that
\begin{align*}
&\frac{1}{2\sqrt{3RN}}\int_{-1}^{1}
  \left(\frac{\pi(1-iu)}{\sqrt{3RN}}\right)^{s}e^{\pi\sqrt{\frac{N}{3R}}\left(\frac{1}{1-iu}+(1-iu)\right)}du\\&=
\frac{i}{2\sqrt{3RN}}\int_{1+i}^{1-i}\left(\frac{\pi v}{\sqrt{3RN}}\right)^se^{\pi\sqrt{\frac{N}{3R}}(v+\frac{1}{v})}dv\\&=
\frac{-i}{2\sqrt{3RN}}\left(\frac{\pi }{\sqrt{3RN}}\right)^s\int_{1-i}^{1+i}v^se^{\pi\sqrt{\frac{N}{3R}}(v+\frac{1}{v})}dv
 \\
 &=\left(\frac{\pi}{\sqrt{3RN}}\right)^{s+1}P_s\\&=\left(\frac{\pi}{\sqrt{3RN}}\right)^{s+1}
 I_{-s-1}\left(2\pi\sqrt{\frac{N}{3R}}\right)+O\left(N^{-(s+1)/2}e^{\frac{3\pi}{2}\sqrt{\frac{N}{3R}}}\right)\qquad\textrm{ (by \eqref{8141100}) } .
\end{align*}
It follows
\begin{align}
I'
&=\frac{\sqrt{\frac{\pi}{a}}}{4\sin\left(\frac{S\pi }{R}\right)}
\times\left(\frac{\pi}{\sqrt{3RN}}\right)^{1/2}
 I_{-\frac{1}{2}}\left(2\pi\sqrt{\frac{N}{3R}}\right)\nonumber\\
  &\quad-\frac{B_1\left(\frac{c}{2a}\right)}{2\sin\left(\frac{S\pi }{R}\right)}\left(\frac{\pi}{\sqrt{3RN}}\right)
 I_{-1}\left(2\pi\sqrt{\frac{N}{3R}}\right)\nonumber\\
 &\quad -\frac{\sqrt{\frac{\pi}{a}}\left(d-\frac{c^2}{4a}+\frac{R}{12}-\frac{S}{2}+\frac{ S^2}{2R} \right)}{4\sin\left(\frac{S\pi }{R}\right)}
 \left(\frac{\pi}{\sqrt{3RN}}\right)^{3/2}
 I_{-\frac{3}{2}}\left(2\pi\sqrt{\frac{N}{3R}}\right)\nonumber\\
  &\quad +\frac{\left[	\left(d-\frac{c^2}{4a}+\frac{R}{12}-\frac{S}{2}+\frac{ S^2}{2R} \right)B_1\left(\frac{c}{2a}\right)+\frac{aB_3\left(\frac{c}{2a}\right)}{3}\right]}{2\sin\left(\frac{S\pi }{R}\right)}
 \left(\frac{\pi}{\sqrt{3RN}}\right)^{2}
 I_{-2}\left(2\pi\sqrt{\frac{N}{3R}}\right)\nonumber\\
 &\quad+\zeta_1^\ast  \left(\frac{\pi}{\sqrt{3RN}}\right)^{5/2}
 I_{-\frac{5}{2}}\left(2\pi\sqrt{\frac{N}{3R}}\right)
 +\zeta_2^\ast  \left(\frac{\pi}{\sqrt{3RN}}\right)^{3}
 I_{-3}\left(2\pi\sqrt{\frac{N}{3R}}\right)
 \nonumber\\
 &\quad+\zeta_3^\ast  \left(\frac{\pi}{\sqrt{3RN}}\right)^{7/2}
 I_{-\frac{7}{2}}\left(2\pi\sqrt{\frac{N}{3R}}\right)
  +O\left(N^{-\frac{3}{2}}e^{2\pi\sqrt{\frac{N}{3R}}}\right)\quad(\textrm{as } N\rightarrow\infty)
.\label{ppi}
\end{align}
By \eqref{eqi}, we have
\begin{align*}
 I_{-3}\left(2\pi\sqrt{\frac{N}{3R}}\right)=
  O\left(N^{-1/4}e^{2\pi\sqrt{\frac{N}{3R}}}\right)\quad(\textrm{as } N\rightarrow\infty)
  ,
\end{align*}
which together with \eqref{ppi} gives \eqref{8141059}.
\end{proof}
\subsection{Error arc.}
We give a bound for $I''$ which is exponentially smaller than the error term of $I'$.
\begin{proposition}\label{p2}
  As $N\rightarrow\infty$,
  \begin{align*}
    I''=O\left(\sqrt{N}e^{2\pi\sqrt{\frac{N}{3R}}+2C\sqrt{3RN}}\right)
  \end{align*}
with some negative constant $C$ depending only on $R, S$.
\end{proposition}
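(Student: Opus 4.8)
The plan is to bound $I''$ crudely by the triangle inequality and then feed in the uniform estimate \eqref{8132150} that was just established for $|L_{a,c,d}(q)|$ on the error arc. First I would pass to absolute values under the integral sign, so that
\[
|I''|\leq\int_{\frac{1}{2\sqrt{3RN}}\leq|x|\leq\frac{1}{2}}\left|L_{a,c,d}\!\left(e^{-\frac{\pi}{\sqrt{3RN}}+2\pi ix}\right)\right|\,\left|e^{\pi\sqrt{\frac{N}{3R}}-2\pi iNx}\right|\,dx.
\]

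The key point is that each of the three factors in this bound is elementary to control. Since $-2\pi iNx$ is purely imaginary, the exponential weight contributes only its modulus $\left|e^{\pi\sqrt{\frac{N}{3R}}-2\pi iNx}\right|=e^{\pi\sqrt{\frac{N}{3R}}}$, uniformly in $x$. For the integrand itself I would invoke \eqref{8132150}, which supplies the uniform bound $|L_{a,c,d}(q)|=O\!\left(\sqrt{N}\,e^{\pi\sqrt{N/(3R)}+2C\sqrt{3RN}}\right)$ over the whole arc $\frac{1}{2\sqrt{3RN}}\leq|x|\leq\frac{1}{2}$, with $C<0$ depending only on $R,S$. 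Finally, the length of the integration range is at most $1$. Multiplying these three estimates together yields
\[
|I''|=O\!\left(\sqrt{N}\,e^{2\pi\sqrt{\frac{N}{3R}}+2C\sqrt{3RN}}\right),
\]
which is precisely the asserted bound.

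I do not expect any genuine obstacle at this step: all of the analytic difficulty has already been absorbed into the corollary giving \eqref{8132150}, whose proof rests on the pointwise bounds for $G_{a,c,d}(q)$ and $1/(q^A;q^B)_\infty$ established in Lemmas \ref{paw} and \ref{caw}. The only point I would take care to emphasize is that \eqref{8132150} holds uniformly across the entire error arc, so that estimating the integral by the product of the length of the arc and the supremum of the integrand loses nothing of substance. It then remains to observe, for the subsequent comparison, that since $C<0$ the factor $e^{2C\sqrt{3RN}}$ decays exponentially in $\sqrt{N}$ and hence dominates both the polynomial prefactor $\sqrt{N}$ and the error term $O\!\left(N^{-3/2}e^{2\pi\sqrt{N/(3R)}}\right)$ of $I'$ from Proposition \ref{p1}; combined with the splitting \eqref{8141117} and the evaluation \eqref{8141059}, this completes the proof of \eqref{thmain2eq}.
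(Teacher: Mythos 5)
Your proposal is correct and follows essentially the same route as the paper: both bound $I''$ by invoking the uniform estimate \eqref{8132150} on the error arc, observing that the exponential weight contributes modulus $e^{\pi\sqrt{N/(3R)}}$, and bounding the arc length by a constant. If anything, your version is slightly cleaner, since you pass to absolute values under the integral sign explicitly rather than factoring the supremum out of an oscillatory integral as the paper's displayed computation does.
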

\begin{proof}
  By \eqref{8132150}, as $N\rightarrow\infty$, we have
\begin{align*}
| I''|&=\left|\int_{|x|\leq\frac{1}{2\sqrt{3RN}}\leq\frac{1}{2}}
  L_{a,c,d}\left(e^{-\frac{\pi}{\sqrt{3RN}}+2\pi ix}\right)e^{\pi\sqrt{\frac{N}{3R}}-2\pi iNx}dx\right|
  \\&= O\left(\sqrt{N}e^{\pi\sqrt{\frac{N}{3R}}+2C\sqrt{3RN}}\right)\times\left|\int_{|x|\leq\frac{1}{2\sqrt{3RN}}\leq\frac{1}{2}}
  e^{\pi\sqrt{\frac{N}{3R}}-2\pi iNx}dx\right|\\&
  =O\left(\sqrt{N}e^{2\pi\sqrt{\frac{N}{3R}}+2C\sqrt{3RN}}\right)
\end{align*}
with some negative constant $C$ depending only on $R, S$.
\end{proof}
\subsection{Proof of \eqref{thmain2eq}.}
Invoking Propositions \ref{p1} and \ref{p2} in equation \eqref{8141117}, we find that, as $N\rightarrow\infty$,
\begin{align*}
 \mathcal{B}_{a,c,d}(N)&=I'+I''
 =\frac{\sqrt{\frac{\pi}{a}}}{4\sin\left(\frac{S\pi }{R}\right)}
 \times\left(\frac{\pi}{\sqrt{3RN}}\right)^{1/2}
 I_{-\frac{1}{2}}\left(2\pi\sqrt{\frac{N}{3R}}\right)\nonumber\\
 &\quad-\frac{B_1\left(\frac{c}{2a}\right)}{2\sin\left(\frac{S\pi }{R}\right)}
\left(\frac{\pi}{\sqrt{3RN}}\right) I_{-1}\left(2\pi\sqrt{\frac{N}{3R}}\right)\nonumber\\
 &\quad -\frac{\sqrt{\frac{\pi}{a}}\left(d-\frac{c^2}{4a}+\frac{R}{12}-\frac{S}{2}+\frac{ S^2}{2R} \right)}{4\sin\left(\frac{S\pi }{R}\right)}
 \left(\frac{\pi}{\sqrt{3RN}}\right)^{3/2}
 I_{-\frac{3}{2}}\left(2\pi\sqrt{\frac{N}{3R}}\right)\nonumber\\
 &\quad +\frac{\left[	\left(d-\frac{c^2}{4a}+\frac{R}{12}-\frac{S}{2}+\frac{ S^2}{2R} \right)B_1\left(\frac{c}{2a}\right)+\frac{aB_3\left(\frac{c}{2a}\right)}{3}\right]}{2\sin\left(\frac{S\pi }{R}\right)}
 \left(\frac{\pi}{\sqrt{3RN}}\right)^{2}
 I_{-2}\left(2\pi\sqrt{\frac{N}{3R}}\right)\nonumber\\
 &\quad
 +O\left(N^{-\frac{3}{2}}e^{2\pi\sqrt{\frac{N}{3R}}}\right)+O\left(\sqrt{N}e^{2\pi\sqrt{\frac{N}{3R}}+2C\sqrt{3RN}}\right).
\end{align*}
Since $C<0$, we have $$\sqrt{N}e^{2\pi\sqrt{\frac{N}{3R}}+2C\sqrt{3RN}}\ll N^{-3/2}e^{2\pi\sqrt{\frac{N}{3R}}}$$
and \eqref{thmain2eq} follows.

\subsection{Sketch of proof \eqref{thmain2eqp}.}
We omit the proof of \eqref{thmain2eqp} and only briefly mention that the proof
is completely similar to that of \eqref{thmain2eq}. The only difference one should take care is the asymptotic expansion of the infinite product $\frac{1}{(q^S,q^{R-S},q^R;q^R)_\infty}$.
Using \eqref{812trtha} and proceeding as in the proof of \eqref{812914}, one can find an analog of Lemma \ref{led2} for $\frac{1}{(q^S,q^{R-S},q^R;q^R)_\infty}$. 
Then asymptotics for the generating function $L^\prime_{a,c,d}(q)$ can be found as in Section \ref{zh1}. We have
\begin{enumerate}[(i)]
	\item for $y=\frac{1}{2\sqrt{2RN}}$ and $|x|\leq y$, as $N\rightarrow\infty$, 
\begin{align}
	L^\prime_{a,c,d}(q)&=	\frac{e^{\frac{\pi i}{4R\tau}}}{2\sin\left(\frac{S\pi }{R}\right)}
	\times \Bigg\{\frac{1}{2}\sqrt{\frac{R}{2a}}-\sqrt{\frac{R}{2\pi}}B_1\left(\frac{c}{2a}\right)	(-2\pi i\tau)^{\frac{1}{2}}\no\\&\qquad\qquad-\frac{	\left(d-\frac{c^2}{4a}+\frac{R}{8}-\frac{S}{2}+\frac{ S^2}{2R} \right)}{2}\sqrt{\frac{R}{2a}}	(-2\pi i\tau)\no\\&\qquad\qquad +\left[	\left(d-\frac{c^2}{4a}+\frac{R}{8}-\frac{S}{2}+\frac{ S^2}{2R} \right)B_1\left(\frac{c}{2a}\right)+\frac{aB_3\left(\frac{c}{2a}\right)}{3}\right]\sqrt{\frac{R}{2\pi}}(-2\pi i\tau)^\frac{3}{2}\no\\&\qquad\qquad+\zeta_1^\ast\tau^{2}+\zeta_2^\ast\tau^{5/2}+\zeta_3^\ast\tau^{3}\Bigg\}+O\left(e^{\pi\sqrt{\frac{N}{3R}}}N^{-\frac{7}{4}}\right), \label{1627lp}
\end{align}
where $\zeta_1^\ast,\zeta_2^\ast,\zeta_3^\ast$ are constants depending on $R, S, a, c$ and $d$.

	\item
for $y=\frac{1}{2\sqrt{2RN}}$ and $y\leq|x|\leq\frac{1}{2}$, then, as $N\rightarrow\infty$, 
	\begin{align}
		|L^\prime_{a,c,d}(q)|=O\left(N^{1/4}e^{\pi\sqrt{\frac{N}{2R}}+2C\sqrt{2RN}}\right),\label{1627lp1}
	\end{align}
	with some negative constant $C$ depending only on $R, S$.
\end{enumerate}

 At last, armed with \eqref{1627lp} and \eqref{1627lp1}, it is routine to use Wright’s Circle Method to obtain \eqref{thmain2eqp}.

\bigskip

\textbf{Acknowledgements.} 
The author thanks Professor Sun Lisa Hui  for her valuable suggestions.
This work is supported by NSFC (National Natural Science
Foundation of China) through Grants NSFC 12071331.


\begin{thebibliography}{99}
\bibitem{ar}
G. E. Andrews, R. Askey and R. Roy, \emph{Special functions}, Encyclopedia of Mathematics and its Applications \textbf{71},
Cambridge University Press, Cambridge, 1999.

\bibitem{tr}
G. E. Andrews, M. Merca,
\emph{The truncated pentagonal number theorem},
J. Combin. Theory Ser. A \textbf{119} (2012), 1639--1643.


\bibitem{tram}
G.E. Andrews, M. Merca, \emph{Truncated theta series and a problem of Guo and Zeng}, J. Combin.
Theory Ser. A, \textbf{154} (2018) 610–619.


\bibitem{ba}
C. Ballantine, B. Feigon, \emph{Truncated theta series related to the Jacobi
triple product identity}, arXiv:2401.04019, 2024.

\bibitem{kk}
K. Bringmann, K. Mahlburg,
\emph{Asymptotic inequalities for positive crank and rank moments},
Trans. Amer. Math. Soc., \textbf{366} (2014), 1073–1094.

\bibitem{kkin}
K. Bringmann, K. Mahlburg,
\emph{Inequalities between ranks and cranks},
Proc. Amer. Math. Soc. \textbf{137} (2009), no. 8, 2567--2574.

\bibitem{bmr1}
K. Bringmann, K. Mahlburg, R. C. Rhoades, \emph{Asymptotics for crank and rank moments}, Bulletin of the London Mathematical Society \textbf{43} (2011), no. 4, 661 - 672.

\bibitem{za}
K. Bringmann, C. Jennings-Shaffer, K. Mahlburg, \emph{
On a Tauberian theorem of Ingham and Euler-Maclaurin summation},
Ramanujan J. \textbf{61} (2023), no. 1, 55–86.

\bibitem{chan}
S.H. Chan, T.P.N Ho, R. Mao, \emph{Truncated series from the quintuple product identity}, J. Number Theory \textbf{169} (2016), 420–438.

\bibitem{chern0}
S. Chern,
\emph{A further look at the truncated pentagonal number theorem}, Acta Arith. 189 (2019), no. 4, 397--403.

\bibitem{chern}
S. Chern, E.X.W. Xia, \emph{Two conjectures of Andrews, Merca and Yee on truncated theta series}, J. Combin. Theory Ser. A \textbf{206} (2024), Paper No. 105874, 16 pp. 

\bibitem{coo}
S. Cooper, \emph{The quintuple product identity}, Int. J. Number Theory \textbf{2} (2006), no. 1, 115–161.

\bibitem{ding}
X. Ding, L.H. Sun, \emph{Truncated theta series from the Bailey lattice},
arXiv preprint arXiv:2403.11608 (2024).

\bibitem{dou}
J. Dousse, M. Mertens, \emph{Asymptotic formulae for partition ranks}, Acta Arith, \textbf{168} (2015), no. 1, 83–100.

\bibitem{guass}
V.J.W. Guo, J. Zeng, \emph{Two truncated identities of Gauss}, J.Combin. Theory Ser. A, \textbf{120} (2013) 700--707.


\bibitem{ji}
T.Y. He, K.Q. Ji, W.J.T. Zang, \emph{Bilateral truncated Jacobi's identity}, European J. Combin. \textbf{51} (2016), 255–267. 
 
\bibitem{maok} 
R. Mao, \emph{Asymptotic inequalities for $k$-ranks and their
 cumulation functions}, J. Math. Anal. Appl. \textbf{409} (2014) 729–741.

\bibitem{maora}
R. Mao, \emph{Some new asymptotic expansions of certain partial theta functions}, Ramanujan J. \textbf{34} (2014), 443-448.

\bibitem{mao1}
R. Mao, \emph{Proofs of two conjectures on truncated series}, J. Combin. Theory Ser. A \textbf{130} (2015), 15–25.

\bibitem{maospt}
R. Mao, \emph{Asymptotic formulas for spt-crank of partitions}, J. Math. Anal. Appl. \textbf{460} (2018), no. 1, 121–139.

\bibitem{mao2}
R. Mao, \emph{Some new expansions for certain truncated q-series}, Ramanujan J. \textbf{46} (2018), no. 2, 475–481. 

\bibitem{mecra1}
M. Merca, \emph{Truncated theta series and Rogers-Ramanujan functions}, Exp.
Math. \textbf{30} (2021), no. 3, 364–371.

\bibitem{mecra2}
 M. Merca, \emph{On two truncated quintuple series theorems}, Exp. Math. \textbf{31} (2022), 606–610.
 
 \bibitem{merca3}
 M. Merca, A.J. Yee, \emph{On the sum of parts with multiplicity at least $2$ in all the partitions of $n$}, Int. J. Number Theory \textbf{17} (2021), no. 3, 665–681.
 
 \bibitem{rad}
 H. Rademacher, \emph{Topics in Analytic Number Theory}, Grundlehren Math. Wiss., Band169, Springer-Verlag, Berlin, 1973.


\bibitem{wang1}
C. Wang, A.J. Yee, \emph{Truncated Jacobi triple product series}, J. Combin. Theory Ser. A \textbf{166} (2019), 382–392. 

\bibitem{wang}
C. Wang, A.J. Yee, \emph{Truncated Hecke-Rogers type series}, Adv. Math. \textbf{365} (2020), 107051, 19 pp.

\bibitem{w1}
E. Wright, \emph{Asymptotic partition formulae II. Weighted partitions}, Proc. Lond. Math. Soc. \textbf{36} (1933), 117–141


\bibitem{w2}
E. Wright, \emph{Stacks. II}, Q. J. Math. \textbf{22} (1971), 107–116. 


\bibitem{xia}
E.X.W. Xia, A.J. Yee, X. Zhao, \emph{New truncated theorems for three classical theta function identities}, European J. Combin. \textbf{101} (2022), Paper No. 103470, 23 pp.


\bibitem{yao}
O.X.M. Yao \emph{Proof of a conjecture of Ballantine and Merca on truncated sums of $6$-regular partitions}, J. Combin. Theory Ser. A \textbf{206} (2024), Paper No. 105903, 10 pp.

\bibitem{yee}
A.J. Yee, \emph{The truncated Jacobi triple product theorem}, J. Combin. Theory Ser. A, \textbf{130} (2015) 1-14.

\bibitem{zhou}
N. H. Zhou, \emph{Positivity and tails of pentagonal number series}, J. Combin. Theory Ser. A \textbf{208} (2024), Paper No. 105933.



\end{thebibliography}
\end{document}